\documentclass{amsart}
\usepackage{graphicx} 
\usepackage{amssymb,amsthm,amsmath,mathtools,float,comment,soul,xcolor,verbatim,graphicx}
\usepackage{hyperref}
\usepackage{cleveref}
\title{Hierarchical Paraproducts}
\author{Oluwadamilola Fasina}
\date{January 2026}

\usepackage[left=3cm, right=3cm, top=2.5cm, bottom=2.5cm]{geometry}
\newtheorem{theorem}{Theorem}[section]
\newtheorem{proposition}[theorem]{Proposition}
\newtheorem{lemma}[theorem]{Lemma}
\newtheorem{corollary}[theorem]{Corollary}
\newtheorem{definition}[theorem]{Definition}
\newtheorem{remark}[theorem]{Remark}

\begin{document}

\maketitle

\begin{abstract}
We outline an extension of paraproduct decompositions for compositions of the form $A(f)$ where $A \in C^{d}(\mathbb{R}), f \in \Lambda_{\alpha}([0,1]^d)$ developed in \cite{fasina2025quasilinearization,fasina2025d} to settings where $(A \in C^1(\mathbb{R}),f \in \Lambda_{\alpha}(X))$  and $ (A \in C^2(\mathbb{R}),f \in \Lambda_{\alpha}(X \times Y))$. To do so, we construct partition trees on $X$ and  $X \times Y$ such that analysis with respect to scale is sensible. We obtain results resembling those of \cite{fasina2025quasilinearization,fasina2025d}, but with the finite sets $X$ and $X \times Y $ as support. In particular we construct the paraproduct $\Pi_{A',A''}^{L,S}: f \to \tilde{A}_{L,S}(f) + \Delta_{L,S}(A,f)$ such that $\Delta_{L,S}(A,f) \in \Lambda_{2\alpha}(X \times Y)$ and $\lVert \Delta_{L,S}(A,f) \rVert_{\Lambda_{2\alpha}(X \times Y)} \leq C_A \lVert f \rVert_{\Lambda_{\alpha}(X \times Y)}$. Analogous results are obtained when the support is just one finite set, $X$. This extension is motivated by situations where one wishes to separate the singular and smooth components of such compositions in graph signal processing environments.
\end{abstract}

\section{Introduction}

Recent endeavors in some areas of graph signal processing \cite{ortega2018graph} have focused on translating tools from harmonic analysis on Euclidean domains \cite{stephane1999wavelet,muscalu2013classical} to graphs and manifolds \cite{saito2022eghwt, cloninger2021natural,gavish2012sampling,gavish2010multiscale}. In particular, \cite{saito2022eghwt,cloninger2021natural,gavish2010multiscale,gavish2012sampling} construct wavelet bases for graphs and \cite{gavish2010multiscale,gavish2012sampling} provide theoretical results concerning the relationship between sparsity and wavelet coefficients in the graph setting. Outlined in this note is an analogous enterprise centered on extending the results of (tensor) paraproducts \cite{bony1981calcul,fasina2025quasilinearization,fasina2025d} for $\alpha$-H\"older functions supported on $[0,1]^d$ to hierarchical paraproducts for $\alpha$-H\"older functions supported on $X$ or $X \times Y$ where $X$ and $X \times Y$ are finite sets with no explicit structure amongst their members. In particular, we obtain the following results on \emph{hierarchical} paraproducts and \emph{hierarchical} tensor paraproducts:

\begin{theorem}
Suppose  $A \in C^1(\mathbb{R})$, $ f \in  \Lambda_{\alpha}(X), 0 < \alpha < \frac{1}{2}$, then for the operator $T: f \to A(f)$ we can approximate $A(f)$ with

\begin{align}
\tilde{A}_L(f) = \sum_{l=0}^L A'(P^l(f))Q^l(f) \in \Lambda_{\alpha}(X)
\label{eq:1}
\end{align}

such that the hierarchical paraproduct transforms $T : f \to A(f)$ to 

\begin{align}
\Pi^{L}_{A'} : f \to \tilde{A}_{L}(f) + \Delta_{L}(A,f)
\label{eq:2}
\end{align}

where $\Delta_L(A,f) \coloneq A(f) - \tilde{A}_L(f) \in \Lambda_{2\alpha}(X)$ is the residual with twice the regularity of $f$ and 

\begin{align}
\lVert \Delta_{L}(A,f) \rVert_{\Lambda_{\alpha}(X)} \leq C_A \lVert f \rVert_{\Lambda_{\alpha}(X)}
\label{eq:3}
\end{align}
\label{thm:1.1}
\end{theorem}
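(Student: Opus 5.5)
The plan is to mirror the dyadic proof of \cite{fasina2025quasilinearization}, replacing dyadic cubes by the folders of a balanced partition tree on $X$. I take that tree to be built as in the setting of \cite{gavish2010multiscale}: a nested family of partitions $\mathcal{P}^l$ of $X$, each folder at level $l$ having diameter comparable to $2^{-l}$ in the tree metric. Here $P^l$ is the conditional expectation onto the folders at level $l$, $Q^l = P^{l+1} - P^l$, and $\Lambda_\alpha(X)$ is the H\"older class with respect to the tree metric. Two equivalences will be used throughout:
\[
\|f\|_{\Lambda_\alpha(X)} \sim \sup_l 2^{l\alpha}\|Q^l f\|_\infty,
\qquad
\|f-P^l f\|_\infty \lesssim 2^{-l\alpha}\|f\|_{\Lambda_\alpha(X)} .
\]
Both follow from the balanced-tree hypothesis by the same argument as in \cite{gavish2010multiscale}. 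Since $X$ is finite, there is a finest level $L$ with $P^{L+1}f = f$. Choosing $P^0 f$ to be the mean and absorbing the constant $A(P^0 f)$ into the sum, we get the telescoping identity
\[
A(f) - A(P^0 f) = \sum_{l=0}^{L}\bigl[A(P^{l+1}f) - A(P^l f)\bigr].
\]

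Taylor's theorem gives $A(P^{l+1}f) - A(P^l f) = A'(P^l f)\,Q^l f + R_l$, where
\[
R_l = \int_0^1 \bigl[A'(P^l f + tQ^l f) - A'(P^l f)\bigr]\,Q^l f\,dt .
\]
Hence $\Delta_L(A,f) = \sum_l R_l$, up to the constant. The whole proof reduces to estimating the $R_l$ scale by scale. I will assume $A'$ is Lipschitz on the range of $f$; a general $C^1$ function only gives continuity of $A'$, and the statement would have to be read with $C_A$ depending on a modulus of continuity of $A'$.

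Under that assumption, $|R_l| \lesssim \|A''\|_\infty |Q^l f|^2 \lesssim 2^{-2l\alpha}\|f\|_{\Lambda_\alpha}^2$. Each $R_l$ is constant on level-$(l+1)$ folders, since both $P^l f$ and $Q^l f$ are. To show $\sum_l R_l \in \Lambda_{2\alpha}(X)$, take $x,y$ whose smallest common folder lies at level $k$. Split $\sum_l R_l(x) - \sum_l R_l(y)$ into two ranges:
\begin{itemize}
\item For $l < k$, $R_l(x) = R_l(y)$, so these terms contribute nothing.
\item For $l \ge k$, bound each term by $2^{-2l\alpha}$ and sum the geometric series to get $\lesssim 2^{-2k\alpha} \sim d(x,y)^{2\alpha}$.
\end{itemize}
This gives the $\Lambda_{2\alpha}$ regularity with constant $C_A\|f\|^2$. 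The stated bound \eqref{eq:3} in the weaker $\Lambda_\alpha$ norm then follows from $\Lambda_{2\alpha}\subset\Lambda_\alpha$ on the bounded metric space $X$, with $\|f\|^2$ absorbed into $C_A$ through an a priori bound on $\|f\|$. Membership $\tilde{A}_L(f)\in\Lambda_\alpha$ follows because $\tilde{A}_L(f) = A(f) - \Delta_L(A,f)$ and $A(f)$ is $\Lambda_\alpha$ whenever $A$ is Lipschitz.

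The main obstacle is the tree step. The geometric decay $|Q^l f|\lesssim 2^{-l\alpha}$ and the comparison between the tree level $k$ and the distance $d(x,y)^{2\alpha}$ both require folder diameters to shrink geometrically with uniform constants. If the tree is merely a partition tree, I would define the metric directly from the tree, $d(x,y) = 2^{-k(x,y)}$, so that the balance property holds by construction. The restriction $\alpha<\tfrac12$ is needed so that $2\alpha<1$ and $\Lambda_{2\alpha}$ is still a nontrivial H\"older class comparable to the characterization by the size of $Q^l$.
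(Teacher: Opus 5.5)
Your proposal is correct and is essentially the paper's argument: telescoping over the scales of $P^l$, the fundamental theorem of calculus along $P^l f + tQ^l f$ (the one-variable version of the interpolation operator $h_{\mu,\omega}$), a scale-by-scale $L^\infty$ bound on the remainder, and a sum over scales split at the level where $x$ and $y$ separate, which is the one-variable form of the $U(\hat{l},\hat{s})/B(\hat{l},\hat{s})$ splitting in the proof of Lemma~\ref{lem:3.11}. The extra hypotheses you state explicitly (Lipschitz $A'$, folders shrinking like $2^{-l}$) are what the paper uses tacitly, since Section~5 invokes $A\in C^2$ and $2^{-l}$-type coefficient bounds; also, the $\Lambda_\alpha$ bound in the statement needs only $A\in C^1$ if you run the same split on $A(f)$ and $\tilde{A}_L(f)$ separately, which gives a bound linear in $\lVert f\rVert_{\Lambda_\alpha(X)}$ with constant $\sup_{[\min f,\max f]}|A'|$ and avoids absorbing $\lVert f\rVert^2$.
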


\begin{theorem}
Suppose  $A \in C^2(\mathbb{R})$, $ f \in  \Lambda_{\alpha}(X \times Y), 0 < \alpha < \frac{1}{2}$, then for the operator $T: f \to A(f)$ we can approximate $A(f)$ with

\begin{align}
& \tilde{A}_{L,S}(f) =  \sum_{l=0}^{L} \sum_{s=0}^{S}  A'(P^lP^s(f)) Q^lQ^s(f) + A''(P^lP^s(f))Q^lP^s(f)P^lQ^s(f) \in \Lambda_{\alpha}(X \times Y)
\label{eq:4}
\end{align}

such that the hierarchical tensor paraproduct transforms $T : f \to A(f)$ to 

\begin{align}
\Pi^{L,S}_{A',A''} : f \to \tilde{A}_{L,S}(f) + \Delta_{(L,S)}(A,f)
\label{eq:5}
\end{align}

where $\Delta_{L,S}(A,f) = A(f) - \tilde{A}_{L,S}(f) \in \Lambda_{2\alpha}(X \times Y)$ is a residual with twice the regularity of $f$ and 

\begin{align}
\lVert \Delta_{L,S}(A,f) \rVert_{\Lambda_{2\alpha}(X \times Y)} \leq C_A \lVert f \rVert_{\Lambda_{\alpha}(X \times Y)}
\label{eq:6}
\end{align}
\label{thm:1.2}
\end{theorem}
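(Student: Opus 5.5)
The plan is to mimic the Euclidean tensor-paraproduct argument of \cite{fasina2025d}, replacing dyadic cubes by the folders of the partition trees on $X$ and $Y$. For a tree with folders at level $l$, $P^l$ is the conditional expectation (average over the level-$l$ folder) and $Q^l = P^{l+1}-P^l$. On the product we use the mixed operators $P^lP^s$, $Q^lP^s$, $P^lQ^s$, $Q^lQ^s$. The H\"older space $\Lambda_\alpha(X\times Y)$ is defined through the tree metric, so that $\|Q^l f\|_\infty \lesssim 2^{-l\alpha}\|f\|_{\Lambda_\alpha}$ in each variable. The mixed difference satisfies $\|Q^lQ^s f\|_\infty \lesssim 2^{-(l+s)\alpha}\|f\|_{\Lambda_\alpha}$, and functions in $\Lambda_\beta$ are characterized by decay of their martingale differences at rate $2^{-l\beta}$, $2^{-s\beta}$. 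Because $X,Y$ are finite, the trees have finite depth. We take $L,S$ to be the depths, so that $P^{L+1}=P^{S+1}=\mathrm{Id}$ and $P^0P^0 f$ is a constant. All sums therefore telescope exactly.

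\textbf{Step 1 (telescoping).} Write
\[
A(f)-A(P^0P^0f)=\sum_{l,s}\bigl[A(P^{l+1}P^{s+1}f)-A(P^{l+1}P^sf)-A(P^lP^{s+1}f)+A(P^lP^sf)\bigr],
\]
plus one-parameter boundary sums in $l$ alone and in $s$ alone. Each boundary sum is handled by the one-variable argument of Theorem~\ref{thm:1.1}.

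\textbf{Step 2 (Taylor expansion).} In each rectangular increment, set $a=Q^lP^sf$, $b=P^lQ^sf$, $c=Q^lQ^sf$, so that $P^{l+1}P^{s+1}f=P^lP^sf+a+b+c$. Expand $A$ to second order around $u=P^lP^sf$:
\[
A(u+a+b+c)-A(u+a)-A(u+b)+A(u)=A'(u)c+A''(u)ab+R_{l,s}.
\]
The remainder $R_{l,s}$ involves $A''$ evaluated at intermediate points times $c(a+b+c)$, together with the modulus of continuity of $A''$ times $ab$. This identifies the main term $\tilde A_{L,S}(f)$ of \eqref{eq:4}, and $\Delta_{L,S}$ is the sum of the $R_{l,s}$ and the boundary residuals.

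\textbf{Step 3 (regularity of the residual).} Show that $\Delta_{L,S}\in\Lambda_{2\alpha}$ with the bound \eqref{eq:6}. The pointwise size alone is not enough: we need to estimate $Q^{l'}$ and $Q^{s'}$ of the residual at every level $(l',s')$. The strategy is to split the terms into $l\ge l'$, where we use the size bound $|R_{l,s}|\lesssim \|A\|_{C^2}2^{-2(l+s)\alpha}\|f\|^2$ and sum geometrically, and $l<l'$. For $l<l'$ we exploit that $A'(P^lP^sf)$, $A''(P^lP^sf)$ and the factors $a,b,c$ are constant on level-$(l+1)$ folders in $x$. Consequently $Q^{l'}$ annihilates them unless the folder of the full function is refined. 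The residual at coarse scales then arises only through the exact cancellation $A(f)-\tilde A$. As in the Euclidean case, $\alpha<\tfrac12$ guarantees $2\alpha<1$, so that $\Lambda_{2\alpha}$ is still characterized by first-order martingale differences and the geometric series $\sum_l 2^{-l(1-2\alpha)}$-type terms converge. The same computation gives $\tilde A_{L,S}\in\Lambda_\alpha$ from $|A'(\cdot)c|\lesssim 2^{-(l+s)\alpha}$ and $|A''ab|\lesssim 2^{-(l+s)\alpha}$.

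\textbf{Main obstacle.} The hard part is Step 3. Turning the pointwise remainder bounds into a genuine $\Lambda_{2\alpha}$ bound on the tree needs a Littlewood--Paley type characterization of $\Lambda_\beta(X\times Y)$ via martingale differences, valid for the constructed partition trees. This characterization requires a balance condition on the folder sizes, namely comparable diameters decaying like $2^{-l}$. I would first prove this characterization as a lemma using the tree construction, and then run the scale-splitting argument above. Controlling the dependence of the constant on $\|f\|$ (quadratic versus linear, as written in \eqref{eq:6}) will require either normalizing $\|f\|_{\Lambda_\alpha}\le 1$ or absorbing one factor into $C_A$.
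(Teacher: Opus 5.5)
Your Steps 1--2 essentially coincide with the paper's steps (1)--(2). In your notation the paper's interpolant is $h_{\mu,\omega}=\omega a+\mu b+\mu\omega c$, and integrating $\partial_\mu\partial_\omega A(u+h_{\mu,\omega})$ gives exactly $A'(u)c+A''(u)ab+R_{l,s}$.

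The proof breaks in Step 3, at the estimate $\|R_{l,s}\|_\infty\lesssim 2^{-2(l+s)\alpha}$. Your scale-splitting is sound, and it reduces everything to that estimate: $R_{l,s}$ is constant on rectangles of level $(l+1,s+1)$, so $Q^{l'}Q^{s'}R_{l,s}=0$ unless $l\ge l'$ and $s\ge s'$. But the estimate is false. Your own Step 2 puts $A''(\cdot)\,c\,(a+b)$ and $\bigl(A''(\xi)-A''(u)\bigr)ab$ into $R_{l,s}$.
\begin{itemize}
\item The factor $a=Q^lP^sf$ has no cancellation in $y$, so it has size $2^{-l\alpha}$ only. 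Hence $|ca|\lesssim 2^{-(2l+s)\alpha}$ and $|cb|\lesssim 2^{-(l+2s)\alpha}$.
\item For $A$ merely $C^2$, the modulus-of-continuity term is $o(2^{-(l+s)\alpha})$ with no power gain at all.
\end{itemize}
Even with $A''$ Lipschitz, the best you get is $\|R_{l,s}\|_\infty\lesssim 2^{-(l+s)\alpha}\,2^{-\min(l,s)\alpha}$. That is a gain in one variable at a time, not $\Lambda_{2\alpha}$.

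This is not an artifact of your method. In your normalization (balanced trees, constants independent of the depth, which is the only nontrivial reading on finite sets), the statement itself fails. Take $A(t)=t^2$ and $f=g\otimes k$. Set $\Gamma_g=\sum_l(Q^lg)^2$ and $G_0=(P^0g)^2$, and likewise $\Gamma_k$, $K_0$ for $k$. Then $2\sum_lP^lg\,Q^lg=g^2-G_0-\Gamma_g$, so $\tilde A_{L,S}(f)=(g^2-G_0-\Gamma_g)(k^2-K_0-\Gamma_k)$ and
\[
\Delta_{L,S}(A,f)=g^2(K_0+\Gamma_k)+(G_0+\Gamma_g)k^2-(G_0+\Gamma_g)(K_0+\Gamma_k).
\]
This fails the $2\alpha$ conditions in two ways.
\begin{itemize}
\item The summand $g^2K_0$ depends on $x$ alone and is only $\alpha$-H\"older. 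It is exactly your Step~1 boundary term. The one-variable paraproduct that Theorem~\ref{thm:1.1} would extract from it is not part of $\tilde A_{L,S}$, so it stays in the residual. It then violates the single-difference conditions of Definition~\ref{def:3.2} at order $2\alpha$.
\item The mixed difference of $\Delta_{L,S}$ equals $\delta_x(g^2)\,\delta_y\Gamma_k+\delta_x\Gamma_g\,\delta_y(k^2)-\delta_x\Gamma_g\,\delta_y\Gamma_k$. For $x,x'$ split at level $m$ we have $\delta_x\Gamma_g=O(2^{-2m\alpha})$. So for suitable $g,k\in\Lambda_\alpha$ and a fixed $y$-pair, the mixed difference is of order $2^{-m\alpha}$, not $2^{-2m\alpha}$.
\end{itemize}
Also $\Delta_{L,S}(\lambda f)=\lambda^2\Delta_{L,S}(f)$, confirming your suspicion that \eqref{eq:6} cannot hold as a linear bound.

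The paper's Proposition~\ref{prop:5.1} does not supply the missing estimate either. It reaches $2^{-(l+s)(2\alpha+1)}$ by applying Lemma~\ref{lem:3.10}, which concerns only $\psi\times\psi$ coefficients, to the wavelet--scaling coefficients $\beta^{l,s},\gamma^{l,s}$. It also works with depth-dependent constants $C_{L,S}$.

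A provable result has to change the statement. The one-parameter paraproducts must move into $\tilde A_{L,S}$. Beyond that, you must either add further terms to the approximant or settle for the anisotropic gain above.
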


Theorems \ref{thm:1.1} and \ref{thm:1.2} enumerated above are spiritually similar to the main results of \cite{fasina2025quasilinearization,fasina2025d}, but their explicit constructions are technically distinct. Without the extension included in this note, computing paraproduct decompositions of compositions supported on a finite sets of points not imbued with a relational structure is intractable with the technology developed in \cite{fasina2025quasilinearization,fasina2025d}. Consequently, we (1) show how to construct the hierarchical operators needed to build the approximations $\tilde{A}_L, \tilde{A}_{L,S}$ in Theorems \ref{thm:1.1} and \ref{thm:1.2} (2) include explicit estimates on the regularity of the residual as the regularity arguments are technically distinct from those in \cite{fasina2025quasilinearization,fasina2025d}, though they remain morally compatible. \\

We mention in passing that hierarchical paraproducts for the setting $(f \in \Lambda_{\alpha}(X), A \in C^1(\mathbb{R}))$ can be useful in machine learning situations where $f$ maps from a finite set of data points to labels (e.g. MNIST) and a $C^1$ function is applied to $f$, but one wishes to extract the latent features of $f$ in the presence of this composition c.f. \cite{gavish2010multiscale} and hierarchical tensor paraproducts can be useful in matrix organization settings as detailed by \cite{gavish2012sampling} where, again, knowledge of singularity invariances of the matrix with respect to its organized geometry built on the space of partition trees is obtainable in the presence of $C^2$ functions. Of note, one could consider an extension to this paper for the setting $(f \in \Lambda_{\alpha}(X_1 \times X_2 \times \ldots \times X_d), A \in C^d(\mathbb{R}))$, analogous to the generalization developed in \cite{fasina2025d}.

\section{Acknowledgements}

The author wishes to thank Ronald R. Coifman for his insight.

\section{Preliminaries}

We begin by defining a multiscale partition on a finite set $X = \{ x_1, x_2, \ldots, x_N \}$. The relevant objects are itemized below, following the same notation as \cite{gavish2010multiscale,gavish2012sampling}.

\subsection{Notation and Assumptions}

\begin{enumerate}
    \item $X =  \{x_1, x_2, \ldots, x_N \}$ is a finite set with each $x_i \in \mathbb{R}^m$
    \item $\mathcal{T}_X = \{X^1, X^2, \ldots, X^L\}$ is the set comprised of partitions of $X$ at scales $l=0, \ldots, L$
    \item We say $X_k^l$ is the $k$th node of $X^l$ which is a tree of $X$ generated at scale $l$ such that $X^l \coloneq \cup_{k=1}^{n(l)} X_k^l $ where  $k=1, \ldots , n(l)$ indexes the number of nodes in tree $X^l$
    \item Let $j = 1, \ldots , H(l,k)$ index the set of nodes at the next finest scale whose elements are contained in $X^l_k$ such that $X_k^l \coloneq \cup_{j=1}^{H(l,k)} X_{k,j}^{l+1} $. 
    \item We require that all the nodes $k=1, \ldots, n(l)$ of a partition tree at scale $l$ are mutually disjoint i.e. $X^l_1 \cap X^l_2  \cap \ldots \cap X^l_{n(l)} = \emptyset $
\end{enumerate}

Here we associate the construction of the partition tree, $\mathcal{T}_X$, in steps (1-5) above with the map $\phi(X,l,k,n(l),j,H(l,k)) \to \mathcal{T}_{X}$. We can use $\phi$ to construct an analogous partition tree for the finite set $Y = \{y_1, y_2, \ldots, y_N \}$ for each $y_i \in \mathbb{R}^m $ such that the partition tree for $Y$ is obtained by $\phi(Y,s,r,n(s),i,H(s,r)) \to \mathcal{T}_{Y}$. We refer the reader to \cite{ankenman2014geometry,gavish2012sampling} for a more detailed exposition on explicit constructions of multiscale partition trees on finite sets, but here we use the map $\phi$ to build $\mathcal{T}_Y$ for the sake of readability. The following definitions are used to define $\alpha$-H\"older regularity in this context:

\begin{definition}
Dyadic distance between two points in a set

\begin{align}
\rho_I(x_i, x_j) = \inf_{k,l} 
\begin{cases} 
|X^l_k| &  x_i , x_j \in X^l_k, x_i \neq x_j    \\
0, & x_i = x_j 
\end{cases}
\label{eq:7}
\end{align}

\begin{align}
\rho_R((x_i,y_i) , (x_j,y_j)) = \inf_{k,l,s,r}
\begin{cases} 
|X^l_k \times Y^s_r|, & (x_i,y_i) , (x_j,y_j) \in X^l_k \times Y^s_r,  (x_i,y_i) \neq (x_j,y_j)  \\
0, & (x_i,y_i) = (x_j,y_j) 
\end{cases}
\label{eq:8}
\end{align}

\label{def:3.1}
\end{definition} 

\begin{definition}
We say $f \in \Lambda_{\alpha}(X), \Lambda_{\alpha}(X \times Y) $, respectively, if the following conditions are satisfied, respectively:

\begin{align}
| f(x_i) - f(x_j) | \leq C \rho_I(x_i,x_j)^{\alpha} \hspace{0.2 cm} \forall x_i, x_j \in X
\label{eq:9}
\end{align}

\begin{align}
& |f(x_i,y_i) - f(x_i,y_j) - f(x_j,y_i) + f(x_j,y_j) | \leq C \rho_R((x_i,y_i) , (x_j,y_i))^{\alpha} \rho_R((x_i,y_i) , (x_i,y_j))^{\alpha} \nonumber \\
& |f(x_i,y_i) - f(x_j,y_i) |\leq C  \rho_R((x_i,y_i) , (x_j,y_i))^{\alpha} \nonumber \\
& |f(x_i,y_i) - f(x_i,y_j) |\leq C  \rho_R((x_i,y_i) , (x_i,y_j))^{\alpha}  \nonumber \\
& \forall (x_i,y_i) , (x_j,y_j), (x_i,y_j) , (x_j,y_i)  \in X \times Y
\label{eq:10}
\end{align}

where $C , \alpha > 0$

\label{def:3.2}
\end{definition}

It now becomes profitable to define the Haar-like family (see \cite{gavish2010multiscale,gavish2012sampling} for more details) that we wish to process functions  in $\Lambda_{\alpha}(X)$, $ \Lambda_{\alpha}(X \times Y)$ with.

\begin{definition}
We now define the Haar-like family supported on $X$ and $X \times Y$ following \cite{gavish2010multiscale}. Let the $j$th wavelet $\psi^{l}_{k,j}(x)$ and scaling $\phi^{l}_{k,j}(x)$ functions  associated with the node $X_k^l$ be defined by:

\begin{align}
\psi^{l}_{k,j}(x) = 
\begin{cases} 
1 & X^{l+1}_{k,m}   \\
-1 & X^{l+1}_{k,n}
\end{cases}
\qquad
\phi^{l}_{k,j}(x) = 
\begin{cases} 
1 & X^{l+1}_{k,m}  \\
1 & X^{l+1}_{k,n} 
\end{cases}
\label{eq:11}
\end{align}

where $X^{l+1}_m , X^{l+1}_n \subset X^l_k$ and $m,n \in \{1,2, \ldots, H(l,k)\}$ i.e. $m,n$ are arbitrary indices associated with the nodes in the subtree of $X^l_k$ used to define the $j$th Haar function associated with the node $X^l_k$. Similarly, the $i$th wavelet $\psi^{s}_{r,i}(x) $ and scaling $\phi^{s}_{r,i}(x) $ functions associated with the node $Y^s_r$ for the set $Y$. There are $\tilde{H}(l,k) \coloneq H(l,k) - 1$ wavelet functions associated with the node $X^l_k$ and $\tilde{H}(l,k) \coloneq H(l,k) - 1$ scaling functions associated with node $X^l_k$. Similarly, there are $\tilde{H}(s,r) \coloneq H(s,r) - 1$ wavelet functions associated with the node $Y^s_r$ and $\tilde{H}(s,r) \coloneq H(s,r) - 1$ scaling functions associated with node $Y^s_r$.

\label{def:3.3}
\end{definition}

Definition \ref{def:3.3} permits us to define the orthonormal tensor wavelet, wavelet-scaling, scaling-wavelet and scaling bases supported on all trees $X_k^l \times Y_r^s \subset \mathcal{T}_X \times \mathcal{T}_Y$. 

\begin{definition}

Let the orthonormal hierarchical tensor wavelet, wavelet-scaling, scaling-wavelet and scaling bases associated with $X^l_k \times Y^s_r$ be defined as: $ (\phi^l_{k,j}(x) \times \phi^s_{r,i}(y)), (\psi^l_{k,j}(x) \times \phi^s_{r,i}(y)), (\phi^l_{k,j}(x) \times \psi^s_{r,i}(y)) $ and $(\psi^l_{k,j}(x) \times \phi^s_{r,i}(y))$, respectively such that $ \lVert \phi^l_{k,j}(x) \times \phi^s_{r,i}(y)   \rVert_{L^2(X^l_k \times Y^s_r)} = \lVert \psi^l_{k,j}(x) \times \phi^s_{r,i}(y)   \rVert_{L^2(X^l_k \times Y^s_r)} = \lVert \phi^l_{k,j}(x) \times \psi^s_{r,i}(y)   \rVert_{L^2(X^l_k \times Y^s_r)}= \lVert \psi^l_{k,j}(x) \times \psi^s_{r,i}(y)  \rVert_{L^2(X^l_k \times Y^s_r)} = 1$
\label{def:3.4}
\end{definition}

\begin{remark}
As discussed in \cite{gavish2010multiscale,gavish2012sampling}, such constructions of wavelet families at each scale, $l=0,  \ldots, L$, permit an orthonormal basis to process functions at that scale such that $ \Psi^l = \{ \phi_{k,j}^l, \psi_{k,j}^l \}_{k=1,j=1}^{n(l),\tilde{H}(l,k)}$ is an orthonormal basis for $L^2(X^l)$ functions at scale $l$ and $\Psi_X = \{ \Psi^l \}_{l=0}^L $ is an orthonormal basis for $L^2(X)$. Analogously, one also has $ \Psi^s = \{ \phi_{r,i}^s, \psi_{r,i}^s \}_{r=1,i=1}^{n(s),\tilde{H}(s,r)}$ as the orthonormal basis for $L^2(Y^s)$ such that $\Psi_Y = \{ \Psi_s \}_{s=0}^S $ is an orthonormal basis for $L^2(Y)$. Naturally this leads to $\Psi = \Psi_X \times \Psi_Y$ forming an orthonormal hierarchical tensor haar-like basis for $L^2(X \times Y)$.
\label{rem:3.5}
\end{remark}

To build an approximation for $A(f)$ we need to define the appropriate scaling and wavelet operators, as done in \cite{fasina2025quasilinearization}, except these scaling operators are defined with respect to $\alpha$-H\"older functions supported on partition trees on $X$ and $Y$. As the approximation of $A(f)$ can be constructed with only the scaling, $P^l$ and tensor scaling, $P^lP^s$, operators acting on $f$, we explicitly build those operators with the following definitions:

\begin{definition} 
Let the hierarchical scaling operator $P^l$ acting on $f \in \Lambda_{\alpha}(X)$ be:

\begin{align}
& P^l(f) \coloneq \sum_{k=1}^{n(l)} P^l_k(f) \nonumber \\
& = \sum_{k=1}^{n(l)} \sum_{j=1}^{\tilde{H}(l,k)} P^l_{ k,j}(f) \nonumber \\
& =  \sum_{k=1}^{n(l)} \sum_{j=1}^{\tilde{H}(l,k)} | < f(x), \phi^l_{ k,j}(x) > | \chi_{X^{l+1}_{k,j}} \nonumber \\
& =  \sum_{k=1}^{n(l)} \sum_{j=1}^{\tilde{H}(l,k)} s^l_{k,j} \chi_{X^{l+1}_{k,j}}
\end{align}

where $s^l_{k,j} \coloneq | < f(x), \phi^l_{ k,j}(x)> | =  \frac{1}{|X^{l+1}_{ k,j}|}\sum_{x_m \in X^{l+1}_{k,j}} f(x_m) \phi^l_{ k,j}(x_m)$ is the scaling coefficient for node $X^{l+1}_{k,j}$ and $ \chi_{X^{l+1}_{k,j}} $ is the characteristic function on node $X^{l+1}_{k,j}$.

\label{def:3.6}
\end{definition}

\begin{definition} 
Let the hierarchical tensor scaling operator $P^lP^s$ acting on $f \in \Lambda_{\alpha}(X \times Y)$ be defined as:

\begin{align}
& P^l P^{s}(f) \coloneq \sum_{k=1}^{n(l)} \sum_{r=1}^{n(s)} P^l_k P^s_r(f) \nonumber \\
& = \sum_{k=1}^{n(l)} \sum_{r=1}^{n(s)} \sum_{j=1}^{\tilde{H}(l,m)} \sum_{i=1}^{\tilde{H}(s,r)} P^l_{ k,j} P^s_{r,i}(f) \nonumber \\
& = \sum_{k=1}^{n(l)} \sum_{r=1}^{n(s)} \sum_{j=1}^{\tilde{H}(l,m)} \sum_{i=1}^{\tilde{H}(s,r)}|<  f(x,y), \phi^l_{ k,j}(x) \times \phi^s_{r,i}(y) > |  \chi_{(X^{l+1}_{k,j} \times Y^{s+1}_{r,i})} \nonumber \\
& = \sum_{k=1}^{n(l)} \sum_{r=1}^{n(s)} \sum_{j=1}^{\tilde{H}(l,m)} \sum_{i=1}^{\tilde{H}(s,r)}  \omega^{l,s}_{k,j,r,i}  \chi_{(X^{l+1}_{k,j} \times Y^{s+1}_{r,i})}
\end{align}

where $ \omega^{l,s}_{k,j,r,i} \coloneq |< f(x,y), \phi^l_{ k,j}(x) \times \phi^s_{r,i}(y) > | = \frac{1}{|X^{l+1}_{k,j} \times Y^{s+1}_{r,i}|} \sum_{(x_m,y_p) \in X^{l+1}_{k,j} \times Y^{s+1}_{r,i}} f(x_m,y_p)( \phi^l_{ k,j}(x_m) \times \phi^s_{r,i}(y_p))$ is the tensor scaling coefficient associated with the set $X^{l+1}_{k,j} \times Y^{s+1}_{r,i}$ and $m \in \{1,2, \ldots, |X^{l+1}_{k,j}| \}$ while $p \in \{1,2, \ldots, |Y^{s+1}_{r,i}| \}$

\label{def:3.7}
\end{definition} 

As the expansion coefficients for all hierarchical operators are crucial for defining the approximation and characterizing the residual, we provide explicit definitions of the coefficients and the remaining associated hierarchical operators in the proceeding definitions:

\begin{definition} Define $s^l_{k,j},d^l_{k,j}, \omega^{l,s}_{k,j,s,r}, \beta^{l,s}_{k,j,s,r},\gamma^{l,s}_{k,j,s,r},\alpha^{l,s}_{k,j,s,r}$ to be the expansion coefficients associated with the hierarchical scaling, hierarchical wavelet, hierarchical tensor scaling, hierarchical wavelet-scaling, hierarchical scaling-wavelet, and hierarchical wavelet operators, respectively:

\begin{align}
& s^{l}_{k,j} \coloneq | < f(x) , \phi^l_{ k,j}(x) > | =  \nonumber \\
& \frac{1}{|X^{l+1}_{ k,j}|}\sum_{x_m \in X^{l+1}_{k,j}} f(x_m) \phi^l_{ k,j}(x_m) 
\end{align}

\begin{align}
& d^{l}_{k,j} \coloneq | < f(x), \psi^l_{k,j}(x)> | =  \nonumber \\
& \frac{1}{|X^{l+1}_{ k,j}|}\sum_{x_m \in X^{l+1}_{k,j}} f(x_m) \phi^l_{ k,j}(x_m)
\end{align}

\begin{align}
&  \omega^{l,s}_{k,j,s,r} \coloneq | < f(x,y), \phi^l_{ k,j}(x) \times \phi^s_{r,i}(y) > | =  \nonumber \\
& \frac{1}{|X^{l+1}_{ k,j} \times Y^{s+1}_{ r,i}  |}\sum_{(x_m,y_p)\in X^{l+1}_{k,j} \times Y^{s+1}_{r,i}} f(x_m,y_p)( \phi^l_{k,j}(x_m) \times \phi^s_{r,i}(y_p))
\end{align}

\begin{align}
&  \beta^{l,s}_{k,j,s,r} \coloneq | < f(x,y), \psi^l_{k,j}(x) \times \phi^s_{r,i}(y) > | =  \nonumber \\
& \frac{1}{|X^{l+1}_{ k,j} \times Y^{s+1}_{ r,i}  |}\sum_{(x_m,y_p)\in X^{l+1}_{k,j} \times Y^{s+1}_{r,i}} f(x_m,y_p) (\psi^l_{k,j}(x_m ) \times \phi^s_{r,i}(y_p))
\end{align}

\begin{align}
&  \gamma^{l,s}_{k,j,s,r} \coloneq | < f(x,y), \phi^l_{ k,j}(x) \times \psi^s_{r,i}(y) > | =  \nonumber \\
& \frac{1}{|X^{l+1}_{ k,j} \times Y^{s+1}_{ r,i}  |}\sum_{(x_m,y_p)\in X^{l+1}_{k,j} \times Y^{s+1}_{r,i}} f(x_m,y_p) (\phi^l_{ k,j}(x_m) \times \psi^s_{r,i}(y_p))
\end{align}

\begin{align}
& \alpha^{l,s}_{k,j,s,r} \coloneq | < f(x,y), \psi^l_{k,j}(x) \times \psi^s_{r,i}(y) > | =  \nonumber \\
& \frac{1}{|X^{l+1}_{ k,j} \times Y^{s+1}_{ r,i}  |}\sum_{(x_m,y_p)\in X^{l+1}_{k,j} \times Y^{s+1}_{r,i}} f(x_m,y_p) (\psi^l_{k,j}(x_m ) \times \psi^s_{r,i}(y_p))
\end{align}

where $m \in \{1,2, \ldots, |X^{l+1}_{k,j}| \}$ and $p \in \{1,2, \ldots, |Y^{s+1}_{ r,i }| \}$ index the coordinates of elements in the nodes $X^{l+1}_{k,j}$ and $Y^{s+1}_{ r,i }$, respectively.

\label{def:3.8}
\end{definition}

The remaining associated operators can now be built:

\begin{definition} Let $Q^l, Q^lP^s, P^sQ^l, Q^lQ^s$ be the hierarchical wavelet, wavelet-scaling, scaling-wavelet, and wavelet operators be defined as follows, respectively: 

\begin{align}
Q^l(f) \coloneq \sum_{k=1}^{n(l)} \sum_{j=1}^{\tilde{H}(l,m)}  d^l_{k,j} \chi_{(X^{l+1}_{k,j})}
\end{align}

\begin{align}
Q^lP^s(f) \coloneq \sum_{k=1}^{n(l)} \sum_{r=1}^{n(s)} \sum_{j=1}^{\tilde{H}(l,m)} \sum_{i=1}^{\tilde{H}(s,r)}  \beta^{l,s}_{k,j,r,i}  \chi_{(X^{l+1}_{k,j} \times Y^{s+1}_{r,i})}
\end{align}

\begin{align}
P^lQ^s(f) \coloneq \sum_{k=1}^{n(l)} \sum_{r=1}^{n(s)} \sum_{j=1}^{\tilde{H}(l,m)} \sum_{i=1}^{\tilde{H}(s,r)}  \gamma^{l,s}_{k,j,r,i}  \chi_{(X^{l+1}_{k,j} \times Y^{s+1}_{r,i})}
\end{align}

\begin{align}
Q^lQ^s(f) \coloneq \sum_{k=1}^{n(l)} \sum_{r=1}^{n(s)} \sum_{j=1}^{\tilde{H}(l,m)} \sum_{i=1}^{\tilde{H}(s,r)}  \alpha^{l,s}_{k,j,r,i}  \chi_{(X^{l+1}_{k,j} \times Y^{s+1}_{r,i})}
\end{align}

\label{def:3.9}
\end{definition}

Now we state lemmas concerning $\alpha$-H\"older regularity coefficient decay needed to establish the regularity of the residual. The following lemmas are stated and proved in \cite{gavish2010multiscale,gavish2012sampling}

\begin{lemma}
$g \in \Lambda_{\alpha}(X \times Y) \hspace{0.2 cm} \texttt{iff} \hspace{0.2 cm} |<g,\psi^l_k \times \psi^s_i>| \leq C |X^l_k \times Y^s_r|^{(\alpha + \frac{1}{2})} \hspace{0.2 cm} \forall \psi^l_k \times \psi^s_r \in \Psi^l \times \Psi^s$ and $ \forall X^l_k \times Y^s_r \in \mathcal{T}_X \times \mathcal{T}_Y$
\label{lem:3.10}
\end{lemma}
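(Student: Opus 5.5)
We prove the two directions of Lemma \ref{lem:3.10} separately. Throughout we use that every tensor Haar-like function $\psi^l_k \times \psi^s_r$ is supported on the rectangle $R = X^l_k \times Y^s_r$, is constant on each child product $X^{l+1}_{k,m} \times Y^{s+1}_{r,p}$, is $L^2$-normalized (Definition \ref{def:3.4}), and has vanishing mean in each variable separately. Only the mean-zero property and the normalization are needed for the forward direction. The converse uses orthonormality and completeness of $\Psi = \Psi_X \times \Psi_Y$ (Remark \ref{rem:3.5}).

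\emph{Forward direction.} Assume $g \in \Lambda_\alpha(X\times Y)$ and fix a reference point $(x_0,y_0) \in R$. Since $\psi^l_k$ has mean zero in $x$ and $\psi^s_r$ has mean zero in $y$, we may subtract from $g$ the three terms $g(x_0,y) + g(x,y_0) - g(x_0,y_0)$ without changing the coefficient. This gives
\begin{align*}
\langle g, \psi^l_k\times\psi^s_r\rangle = \sum_{(x,y)\in R} \big[g(x,y)-g(x_0,y)-g(x,y_0)+g(x_0,y_0)\big]\,\psi^l_k(x)\psi^s_r(y).
\end{align*}
The mixed-difference condition in \eqref{eq:10} bounds the bracket by $C\rho_R^\alpha\rho_R^\alpha$. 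Both distances are realized inside the common rectangle $R$, so each $\rho_R$ is at most $|R|$, and the bracket is at most $C|R|^{2\alpha}$. We then apply Cauchy--Schwarz together with $\|\psi^l_k\times\psi^s_r\|_{L^2}=1$. This gives $|\langle g,\psi\rangle| \le C|R|^{2\alpha}\,|R|^{1/2}$ in counting measure.

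To arrive at the exponent $\alpha + \frac12$, we have to track which normalization of measure the paper intends. With normalized measure on $X\times Y$, the sizes satisfy $|R|\le 1$, and the product structure suggests that the correct bound uses $\rho_R(\cdot)^\alpha \le |R|^{\alpha/2}$ in each direction. I would therefore state the bound with the exponent that the mixed condition actually yields, and reconcile it with $\alpha+\frac12$ using the convention $|R| \le 1$, so that higher powers are dominated by lower ones.

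\emph{Converse direction.} Assume the coefficient decay. Expand $g$ in $\Psi$ and take a mixed difference at points $(x_i,y_i),(x_j,y_j)$. Only those basis functions whose $X$-factor separates $x_i$ from $x_j$ and whose $Y$-factor separates $y_i$ from $y_j$ contribute. These are exactly the nodes containing both points, i.e. the chain of ancestors of the smallest common node in each tree. Each such term is bounded by $C|R|^{\alpha+1/2}\cdot\|\psi\|_\infty$, and $\|\psi\|_\infty \lesssim |R|^{-1/2}$. Summing over the two ancestor chains gives a double sum of $|X^l_k|^\alpha|Y^s_r|^\alpha$.

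\emph{Main obstacle.} The step I expect to be hardest is controlling this double sum by the smallest term. That requires a balance condition on the trees, namely that node sizes decay geometrically along ancestor chains, $|X^{l+1}_{k,j}|\le q|X^l_k|$ with $q<1$. This is the standard assumption in \cite{gavish2010multiscale,gavish2012sampling}, and I would import it explicitly. Once it is in place, the double sum is a product of two geometric series, each dominated by its leading term $\rho^\alpha$. The one-variable conditions in \eqref{eq:10} follow by the same argument applied to the terms whose factor in the other variable is the scaling function.
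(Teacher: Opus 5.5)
The paper gives no argument of its own for this lemma; it only cites \cite{gavish2012sampling}. Your outline is the standard argument behind that reference. The direct implication is acceptable once you commit to normalized measure, since then $|R|^{2\alpha+\frac12}\le|R|^{\alpha+\frac12}$. It is cleaner, though, not to bound each $\rho_R$ factor by $|R|$. The two factors separate the points in different coordinates, so (with singleton leaves) their product is at most $|X^l_k|\,|Y^s_r|=|R|$. Cauchy--Schwarz then gives exactly $|R|^{\alpha+\frac12}$ in any normalization. Your claim that $\rho_R\le|R|^{1/2}$ ``in each direction'' is neither justified nor needed.

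The converse, however, has genuine gaps.

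\emph{Wrong index set.} Every Haar-like function of a node is constant on that node's children. So for any node strictly containing the smallest common node $I_0$ of $x_i,x_j$, both points lie in one child and $\psi(x_i)-\psi(x_j)=0$: the ancestors you sum over contribute nothing. The nonzero terms come from $I_0$ itself and from the nodes below $I_0$ containing exactly one of the two points, and likewise in $y$. Along the ancestor chain the sizes grow toward the root, so your double sum is at least $|X|^\alpha|Y|^\alpha$, not of order $\rho^\alpha$, and no decay assumption changes this. Along the correct descending chains, your condition $|X^{l+1}_{k,j}|\le q|X^l_k|$ does give a bound of order $\rho_X^\alpha\rho_Y^\alpha/(1-q^\alpha)^2$.

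\emph{Missing lower balance.} The bound $\lVert\psi\rVert_\infty\lesssim|R|^{-1/2}$ is not automatic. A normalized Haar-like function separating a small child from its siblings has sup norm of order $|\text{child}|^{-1/2}$. You therefore also need the lower balance condition $|X^{l+1}_{k,j}|\ge b|X^l_k|$, which additionally bounds the number of functions per node by $1/b$. Without it the converse fails quantitatively. Take $g=c\,\psi^l_{k,j}\times\psi^s_{r,i}$ with $|c|=C|R|^{\alpha+\frac12}$, where each factor separates a singleton child from two siblings of about half the parent's size. This $g$ satisfies your decay hypothesis and the upper balance condition, yet its mixed-difference constant is of order $C|R|^{1/2}$ in counting measure.

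\emph{One-variable conditions.} The one-variable inequalities of Definition~\ref{def:3.2} do not follow ``by the same argument''. Tensor wavelet coefficients vanish on every $a(x)+b(y)$. The terms with a scaling factor in the other variable have coefficients the hypothesis does not control. You must either add their decay to the hypothesis or restrict the converse to the mixed condition.

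Finally, on a finite set both sides of the lemma hold for every $g$ with \emph{some} constant. The statement only has content with constants depending on $C$, $\alpha$, $q$, $b$, which is why the balance hypotheses and this bookkeeping cannot be left implicit.
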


\begin{proof}
See \cite{gavish2012sampling}
\end{proof}

\begin{lemma}
Suppose $g : X \times Y \to \mathbb{R}$ can be expressed as a sum over expansion coefficients of one of the orthonormal hierarchical  tensor wavelet ($\psi^l_{k,j} \times \psi^s_{r,i}$), wavelet-scaling ($\psi^l_{k,j} \times \phi^s_{r,i}$), scaling-wavelet ($\phi^l_{k,j} \times \psi^s_{r,i}$), and scaling ($\phi^l_{k,j} \times \phi^s_{r,i}$) bases supported on $X \times Y$ as defined in \ref{def:3.3}.

\begin{align}
g(x,y) \coloneq \sum_{l=0}^{L} \sum_{s=0}^{S} \sum_{k=1}^{n(l)} \sum_{r=1}^{n(s)} \sum_{j=1}^{\tilde{H}(l,k)} \sum_{i=1}^{\tilde{H}(s,r)} \eta^{l,s}_{k,j,r,i} \chi_{(X^l_{k,j} \times Y^s_{r,i})}(x,y)
\end{align}

where $\eta^{l,s}_{k,j,r,i}$ is an expansion coefficient of $(\psi^l_{k,j} \times \psi^s_{r,i}), (\psi^l_{k,j} \times \phi^s_{r,i}), (\phi^l_{k,j} \times \psi^s_{r,i})$, or $(\phi^l_{k,j} \times \phi^s_{r,i})$ associated with $X^l_k \times Y^s_r$ and $\chi_{(X^l_{k,j} \times Y^s_{r,i})}$ is the characteristic function on $X^l_{k,j} \times Y^s_{r,i}$. If $|\eta^{l,s}_{k,j,r,i}| \leq |X^l_k \times Y^s_r|^{(2\alpha + 1)} \hspace{0.2 cm} \forall \hspace{0.2 cm} X^l_k \times Y^s_r \subset \mathcal{T}_X \times \mathcal{T}_Y$, then $g \in \Lambda_{2 \alpha}(X \times Y)$

\label{lem:3.11}
\end{lemma}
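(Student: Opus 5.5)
We prove Lemma \ref{lem:3.11} by verifying the three inequalities of Definition \ref{def:3.2} directly for $g$, with exponent $2\alpha$, using the tree structure of $\mathcal{T}_X$ and $\mathcal{T}_Y$. The key observation is that each summand $\eta^{l,s}_{k,j,r,i}\chi_{(X^l_{k,j}\times Y^s_{r,i})}$ is a product of an indicator in $x$ and an indicator in $y$. We therefore get a mixed difference only when both coordinates are separated by the node boundaries at that level. We assume the scales are indexed so that node sizes shrink geometrically, e.g. $|X^{l+1}_{k,j}| \le \theta |X^l_k|$ for some $\theta<1$ uniformly. This is the standard balance condition in \cite{gavish2010multiscale,gavish2012sampling}, and we invoke it explicitly because without it the geometric sums below cannot be controlled.

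\textbf{Step 1 (mixed difference).} Fix points $(x_i,y_i),(x_j,y_j)$. Let $X^{l_0}_{k_0}$ be the smallest node of $\mathcal{T}_X$ containing $x_i,x_j$, so that $\rho_R((x_i,y_i),(x_j,y_i))$ is comparable to $|X^{l_0}_{k_0}\times Y^{s}_{r}|$ at the finest $s$. Let $Y^{s_0}_{r_0}$ be defined similarly for $y_i,y_j$. For a summand indexed by $(l,s)$, the mixed difference
\[
\chi(x_i,y_i)-\chi(x_i,y_j)-\chi(x_j,y_i)+\chi(x_j,y_j)
\]
factors as $(\chi_{X^l_{k,j}}(x_i)-\chi_{X^l_{k,j}}(x_j))(\chi_{Y^s_{r,i}}(y_i)-\chi_{Y^s_{r,i}}(y_j))$. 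This product vanishes unless $l\ge l_0$ and $s\ge s_0$, since at coarser scales both points lie in the same child. Moreover, at each such scale pair, at most a bounded number of nodes (those containing one of the points) contribute. The mixed difference is therefore bounded by
\[
4\sum_{l\ge l_0}\sum_{s\ge s_0}\max |X^l_k\times Y^s_r|^{2\alpha+1}.
\]
Using the geometric decay, this sum is dominated by its first term, giving a bound $C|X^{l_0}_{k_0}|^{2\alpha}|Y^{s_0}_{r_0}|^{2\alpha}$ after absorbing the extra factor $|X\times Y|$-powers into the constant (the extra $+1$ in the exponent only helps, since all sizes are at most $|X||Y|$). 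We then compare this with $\rho_R(\cdot,\cdot)^{2\alpha}\rho_R(\cdot,\cdot)^{2\alpha}$.

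\textbf{Step 2 (single differences).} For $|g(x_i,y_i)-g(x_j,y_i)|$, only the $x$-indicator changes. Terms with $l<l_0$ cancel, and the remaining sum over $l\ge l_0$ and all $s$ is again geometric in $l$. Summing over $s$ yields a factor bounded by a constant times $|Y|^{2\alpha+1}$. This gives the bound $C\rho_R((x_i,y_i),(x_j,y_i))^{2\alpha}$, and the $y$-difference is symmetric.

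The main obstacle is the comparison step: $\rho_R$ as defined in \eqref{eq:8} is the size of the smallest \emph{rectangle} containing both points. For points differing only in $x$, this is $|X^{l_0}_{k_0}|\cdot|Y^{s}_{r}|$ with the finest $Y$-node containing $y_i$, not simply $|X^{l_0}_{k_0}|$. I would handle this by bounding the $y$-factor from below by a constant (the smallest leaf size, at least $1$) so that $|X^{l_0}_{k_0}|^{2\alpha}\lesssim\rho_R^{2\alpha}$. I would then check that the powers $|X^l_k\times Y^s_r|^{2\alpha+1}$ in the hypothesis dominate what is needed, and I would track this constant carefully since it depends on the finite sets.
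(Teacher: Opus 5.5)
Your argument is correct and follows the same route as the paper's proof: contributions from scale pairs at which the chosen points are not separated cancel, the surviving ones are bounded via the coefficient hypothesis and summed over scales, and the result is compared with $\rho_R$. Your factorization of the mixed difference of $\chi_{(X^l_{k,j}\times Y^s_{r,i})}$ gives a cleaner description of the surviving scales ($l\ge l_0$ and $s\ge s_0$) than the paper's $U/B$ split.

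The balance condition you add is not actually needed. There are only $(L+1)(S+1)$ scale pairs, and nestedness gives $|X^l_k|\le|X^{l_0}_{k_0}|$ and $|Y^s_r|\le|Y^{s_0}_{r_0}|$ on every contributing term. So a constant depending on $L$, $S$ and the finite sets (like the paper's $C_{L,S}$) already suffices.
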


\begin{proof}
Pick 4 arbitrary points $(x_i,y_i), (x_j,y_i), (x_i,y_j), (x_j,y_j) \in X \times Y$. Let

\begin{align}
(\hat{l},\hat{s}) \coloneq \inf_{\substack{l = 0, \ldots, L \\ s = 0, \ldots, S}} \{ (l,s) : \exists X^l_k \times Y^s_r, \exists (x,y) \in \{ (x_i,y_i), (x_j,y_i), (x_i,y_j), (x_j,y_j)\} ,(x,y) \notin X^l_k \times Y^s_k \}
\end{align}

 i.e. $(\hat{l},\hat{s})$ is the smallest tuple of scales containing at least one cartesian product of nodes in the subtrees which doesn't contain one of the arbitrary points. Let
 
 \begin{align}
 R(\hat{l},\hat{s}) \coloneq \sup_{\substack{k = 1, \ldots, n(l) \\ r = 1, \ldots ,n(s)}} \{ X^{\hat{l}}_k \times Y^{\hat{s}}_r : \exists X^{\hat{l}}_k \times Y^{\hat{s}}_r, \exists (x,y) \in \{ (x_i,y_i), (x_j,y_i), (x_i,y_j), (x_j,y_j)\} ,(x,y) \notin X^{\hat{l}}_k \times Y^{\hat{s}}_k   \} 
 \end{align}

such that $ R(\hat{l},\hat{s})$ is the largest cartesian product of nodes at scales $(\hat{l},\hat{s})$ not containing one of the four points and define

\begin{align}
U(\hat{l},\hat{s}) \coloneq \{ (l,s) : |X^l_k \times Y^s_r| > R(\hat{l},\hat{s}) \hspace{0.2 cm} \forall \hspace{0.2 cm} k=1, \ldots, n(l), r = 1, \ldots, n(s)\}
\end{align}

such that the cartesian product of nodes in the subtrees at the scales $(l,s) \in U(\hat{l},\hat{s})$ contains all 4 points. Define $LS \coloneq \{0,1,2 \ldots, L \} \times \{0,1,2, \ldots, S\}  $ and $B(\hat{l},\hat{s}) \coloneq LS \setminus U(\hat{l},\hat{s}) $ such that $B(\hat{l},\hat{s})$ contains the tuples $(l,s)$ with the associated cartesian products of subtrees where at least one $X^l_k \times Y^s_r$ doesn't contain one of the four points. We show $g \in \Lambda_{\alpha}(X \times Y)$, by satisfying the conditions specified in Definition \ref{def:3.2}. First observe one can write $g$ as:

\begin{align}
&g(x,y) \coloneq \sum_{l=1}^{L} \sum_{s=1}^{S} \sum_{k=1}^{n(l)} \sum_{r=1}^{n(s)} \sum_{j=1}^{\tilde{H}(l,k)} \sum_{i=1}^{\tilde{H}(s,r)} \eta^{l,s}_{k,j,r,i} \chi_{(X^l_{k,j} \times Y^s_{r,i})}(x,y) \nonumber \\
& =  \sum_{(l,s) \in LS} \sum_{k=1}^{n(l)} \sum_{r=1}^{n(s)} \sum_{j=1}^{\tilde{H}(l,k)} \sum_{i=1}^{\tilde{H}(s,r)} \eta^{l,s}_{k,j,r,i} \chi_{(X^l_{k,j} \times Y^s_{r,i})}(x,y) \nonumber \\
& =  \sum_{(l,s) \in U(\hat{l},\hat{s})} \sum_{k=1}^{n(l)} \sum_{r=1}^{n(s)} \sum_{j=1}^{\tilde{H}(l,k)} \sum_{i=1}^{\tilde{H}(s,r)} \eta^{l,s}_{k,j,r,i} \chi_{(X^l_{k,j} \times Y^s_{r,i})}(x,y) +  \sum_{(l,s) \in B(\hat{l},\hat{s})} \sum_{k=1}^{n(l)} \sum_{r=1}^{n(s)} \sum_{j=1}^{\tilde{H}(l,k)} \sum_{i=1}^{\tilde{H}(s,r)} \eta^{l,s}_{k,j,r,i} \chi_{(X^l_{k,j} \times Y^s_{r,i})}(x,y) 
\end{align}

By construction,

\begin{align}
&\sum_{(l,s) \in U(\hat{l},\hat{s})} \sum_{k=1}^{n(l)} \sum_{r=1}^{n(s)} \sum_{j=1}^{\tilde{H}(l,k)} \sum_{i=1}^{\tilde{H}(s,r)} \eta^{l,s}_{k,j,r,i} \chi_{(X^l_{k,j} \times Y^s_{r,i})}(x_i,y_i) - \nonumber \\
&\chi_{(X^l_{k,j} \times Y^s_{r,i})}(x_j,y_i) -  \chi_{(X^l_{k,j} \times Y^s_{r,i})}(x_i,y_j) +  \chi_{(X^l_{k,j} \times Y^s_{r,i})}(x_j,y_j) = 0
\end{align}

and 

\begin{align}
&\sum_{(l,s) \in B(\hat{l},\hat{s})} \sum_{k=1}^{n(l)} \sum_{r=1}^{n(s)} \sum_{j=1}^{\tilde{H}(l,k)} \sum_{i=1}^{\tilde{H}(s,r)} \eta^{l,s}_{k,j,r,i} \chi_{(X^l_{k,j} \times Y^s_{r,i})}(x_i,y_i) - \nonumber \\
&\chi_{(X^l_{k,j} \times Y^s_{r,i})}(x_j,y_i) -  \chi_{(X^l_{k,j} \times Y^s_{r,i})}(x_i,y_j) +  \chi_{(X^l_{k,j} \times Y^s_{r,i})}(x_j,y_j) \nonumber \\
& \leq \sum_{(l,s) \in B(\hat{l},\hat{s})} \sum_{k=1}^{n(l)} \sum_{r=1}^{n(s)} \sum_{j=1}^{\tilde{H}(l,k)} \sum_{i=1}^{\tilde{H}(s,r)} C_{L,S}2^{-(l+s)(2\alpha + 1)} \nonumber \\
& \leq C_{L,S}2^{-(l+s)(2\alpha + 1)} \nonumber \\
& \leq C_{L,S} \rho_R((x_i,y_i) , (x_j,y_i))^{\alpha} \rho_R((x_i,y_i) , (x_i,y_j))^{\alpha}
\end{align}

where the first inequality comes from the estimates on $\eta^{l,s}_{k,j,r,i}$, the second inequality from collapsing the geometric series, and the last inequality from construction of $R(\hat{l},\hat{s})$. Thus, $|g(x_i,y_i) - g(x_i,y_j) - g(x_j,y_i) + g(x_j,y_j)| \leq C_{L,S}\rho_R((x_i,y_i) , (x_j,y_i))^{\alpha} \rho_R((x_i,y_i) , (x_i,y_j))^{\alpha} $. The relations $|g(x_i,y_i) - g(x_j,y_i)| \leq C_{L,S} \rho_R((x_i,y_i) , (x_j,y_i))^{\alpha} $ and  $|g(x_i,y_i) - g(x_i,y_j)| \leq C_{L,S} \rho_R((x_i,y_i) , (x_i,y_j))^{\alpha} $ hold by the same arguments, thus $g \in \Lambda_{\alpha}(X \times Y)$ since selection of the 4 points was arbitrary.

\end{proof}

\section{Hierarchical Paraproducts}

Included in this section are our main results: the hierarchical paraproduct decomposition for $T: f \to A(f)$ under the conditions $A \in C^1(\mathbb{R}),f \in \Lambda_{\alpha}(X)$ where $X = \{x_1, x_2, \ldots, x_N\} $ is a finite set and a second theorem stating the hierarchical tensor paraproduct decomposition under the conditions $A \in C^2(\mathbb{R}),f \in \Lambda_{\alpha}(X \times Y)$ where $X \times Y = \{x_1, x_2, \ldots, x_N\} \times \{y_1, y_2, \ldots, y_N\}$ is the cartesian product of the finite sets $X,Y$.

\begin{theorem}
Suppose  $A \in \mathcal{C}^1(\mathbb{R})$, $ f \in  \Lambda_{\alpha}(X), 0 < \alpha < \frac{1}{2}$, then for the operator $T: f \to A(f)$ we can approximate $A(f)$ with

\begin{align}
\tilde{A}_L(f) = \sum_{l=0}^L A'(P^l(f))Q^l(f) \in \Lambda_{\alpha}(X)
\end{align}

such that the hierarchical paraproduct transforms $T : f \to A(f)$ to 

\begin{align}
\Pi^{L}_{A'} : f \to \tilde{A}_{L}(f) + \Delta_{L}(A,f)
\label{eq:3}
\end{align}

where $\Delta_L(A,f) \coloneq A(f) - \tilde{A}_L(f) \in \Lambda_{2\alpha}(X)$ is the residual with twice the regularity of $f$ and 

\begin{align}
\lVert \Delta_{L}(A,f) \rVert_{\Lambda_{\alpha}(X)} \leq C_A \lVert f \rVert_{\Lambda_{\alpha}(X)}
\end{align}
\label{thm:4.1}
\end{theorem}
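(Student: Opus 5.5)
The plan is to mimic the classical Bony/Coifman--Meyer argument on the tree $\mathcal{T}_X$. There are three steps: write $A(f)$ as a telescoping sum along the scales of $\mathcal{T}_X$, Taylor expand each increment, and show that the Taylor remainders have hierarchical Haar coefficients decaying like $|X^l_k|^{2\alpha+1/2}$. The one-dimensional analogue of Lemma \ref{lem:3.10} (the coefficient characterization of $\Lambda_\alpha(X)$ from \cite{gavish2010multiscale}) and the one-variable version of Lemma \ref{lem:3.11} then turn that decay into membership of $\Lambda_{2\alpha}(X)$.

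\textbf{Step 1 (telescoping).} Use that $\Psi_X$ is an orthonormal basis of $L^2(X)$ (Remark \ref{rem:3.5}). Then $P^{l+1}f = P^l f + Q^l f$ on each node, with $P^0 f$ the global average and $P^{L+1}f = f$ at the finest scale, where the nodes are singletons. This gives
\begin{align*}
A(f) = A(P^0 f) + \sum_{l=0}^{L}\bigl[A(P^l f + Q^l f) - A(P^l f)\bigr].
\end{align*}
By the mean value theorem applied pointwise on each node $X^{l+1}_{k,j}$,
\begin{align*}
A(P^lf+Q^lf)-A(P^lf) = A'(P^lf)\,Q^lf + R_l, \qquad R_l = \bigl[A'(\xi_l)-A'(P^lf)\bigr]Q^lf ,
\end{align*}
with $\xi_l$ between $P^lf$ and $P^{l+1}f$. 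Hence $\Delta_L(A,f) = A(P^0f) + \sum_l R_l$. The constant term is harmless.

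\textbf{Step 2 (size of the pieces).} By the coefficient characterization, $|Q^lf| \lesssim \|f\|_{\Lambda_\alpha}|X^l_k|^{\alpha}$ on $X^l_k$; equivalently $|d^l_{k,j}|$ is of order $|X^l_k|^{\alpha}$ in the normalization of Definition \ref{def:3.8}. This also shows $\tilde A_L(f)\in\Lambda_\alpha(X)$, since $A'$ is bounded on the compact range of $f$.

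For the remainder I use $|\xi_l - P^lf| \le |Q^lf|$. This is the delicate point: with only $A\in C^1$ one has just $|A'(\xi_l)-A'(P^lf)| = o(1)$, not a Hölder rate. I would therefore assume, or reduce to, $A'$ being Lipschitz on the range of $f$. That gives $|R_l| \lesssim \|A''\|_\infty |Q^lf|^2 \lesssim |X^l_k|^{2\alpha}$ on $X^l_k$, and $R_l$ is constant on the children $X^{l+1}_{k,j}$.

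\textbf{Step 3 (regularity of the residual).} $\sum_l R_l$ is now a function of the form in Lemma \ref{lem:3.11}, specialized to one factor: piecewise constant on the nodes at scale $l+1$, with coefficients bounded by $C|X^l_k|^{2\alpha}$. Run the proof of Lemma \ref{lem:3.11} in one variable. Fix $x_i\ne x_j$ and let $X^{\hat l}_k$ be the smallest node containing both, so $\rho_I(x_i,x_j) = |X^{\hat l}_k|$.
\begin{itemize}
\item For $l<\hat l$, the scales where both points lie in the same child, the terms $R_l(x_i)$ and $R_l(x_j)$ coincide and cancel.
\item For $l\ge\hat l$, bound each term by $C|X^l_{k'}|^{2\alpha}$.
\end{itemize}
Summing the second group requires the node sizes to shrink geometrically along a branch, $|X^{l+1}_{k,j}| \le \theta |X^l_k|$ with $\theta<1$. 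This is the balanced-tree assumption implicit in the dyadic estimates of Lemma \ref{lem:3.11}, and I would state it explicitly. The sum is then $\lesssim \rho_I(x_i,x_j)^{2\alpha}$. This yields $\Delta_L(A,f)\in\Lambda_{2\alpha}(X)$ with constant $C_A\|f\|_{\Lambda_\alpha}$, after absorbing a factor $\|f\|_{\Lambda_\alpha}$ into $C_A$ on the bounded set of $f$ considered; the restriction $\alpha<1/2$ keeps $2\alpha<1$ so the geometric sum converges in the right direction. The weaker stated bound in $\Lambda_\alpha(X)$ follows because $\rho_I\le |X|$ and $\Lambda_{2\alpha}(X)\subset\Lambda_\alpha(X)$ with norm constant $|X|^{\alpha}$.

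\textbf{Main obstacle.} I expect the second half of Step 2 to be hardest: getting the quadratic bound $|R_l|\lesssim|Q^lf|^2$ when only $A\in C^1$ is given. My first attempt would be to use the finiteness of $X$. Since $f$ takes finitely many values, one can replace $A$ by a $C^{1,1}$ function agreeing with $A$ and $A'$ at the finitely many values of $P^lf$, $P^{l+1}f$ without changing $\tilde A_L$ or $\Delta_L$, at the price of $C_A$ depending on $X$. If that fails, I would add the hypothesis $A'\in\mathrm{Lip}$, which matches the $A\in C^2$ assumption of Theorem \ref{thm:1.2}.
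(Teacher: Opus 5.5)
Your outline (telescoping in $l$, the mean value theorem on each node, pointwise bounds on the increments, then a one-variable version of Lemma \ref{lem:3.11}) is the same as the paper's. The paper's proof of this theorem defers to the Section 5 argument, and Proposition \ref{prop:5.1} gets its increment bounds by invoking $A\in C^2$ on a compact set, which is essentially the Lipschitz control of $A'$ you propose to add. Under that extra hypothesis, and with geometrically shrinking nodes, your argument is correct and even gives the scale-invariant bound $\lVert \Delta_L(A,f)\rVert_{\Lambda_{2\alpha}(X)}\lesssim \mathrm{Lip}(A')\,\lVert f\rVert_{\Lambda_\alpha(X)}^2$.

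For the statement as given ($A\in C^1$), however, the step you flag is a genuine gap, and your first patch does not close it. A $C^{1,1}$ interpolant matching $A$ and $A'$ at the values of $P^lf$ and $P^{l+1}f$ has a Lipschitz constant governed by the spacing of those values, hence by $f$, not by $X$. For example, take $A(t)=\tfrac23\,\mathrm{sign}(t)\,|t|^{3/2}$ and a node split into two equal-size children on which $f=\epsilon$ and $f=-\epsilon$. Then $P^lf=0$, $Q^lf=\pm\epsilon$ and $|R_l|=\tfrac23\epsilon^{3/2}$, so even on a fixed $X$ there is no bound $|R_l|\le C|Q^lf|^2$ with $C$ independent of $f$. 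Absorbing a factor $\lVert f\rVert_{\Lambda_\alpha}$ into $C_A$ also changes what is being proved, since the stated bound is linear in $f$.

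What you are missing is that the statement as written does not need the quadratic estimate. Take $\lVert\cdot\rVert_{\Lambda_\alpha(X)}$ to be the best constant in Definition \ref{def:3.2}, as you do. The displayed inequality is in $\Lambda_\alpha(X)$, and with only $A\in C^1$ you have $|R_l|\le 2\sup_{[\min f,\max f]}|A'|\,|Q^lf|\le 2\sup|A'|\,\lVert f\rVert_{\Lambda_\alpha}|X^l_k|^{\alpha}$. Now run your Step 3: the terms with $l<\hat l$ cancel. For the at most $L+1$ scales $l\ge\hat l$, the scale-$l$ nodes containing $x_i,x_j$ lie inside their common node, so $|X^l_k|\le\rho_I(x_i,x_j)$. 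This gives $\lVert \Delta_L(A,f)\rVert_{\Lambda_\alpha(X)}\le 4(L+1)\sup|A'|\,\lVert f\rVert_{\Lambda_\alpha(X)}$, linear in $f$ as stated. Equivalently, apply the triangle inequality to $A(f)$ and $\tilde A_L(f)$.

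Membership in $\Lambda_{2\alpha}(X)$ is automatic on a finite set. With the paper's unnormalized $\rho_I$, two distinct points share only nodes of cardinality at least $2$, so $\rho_I^{\alpha}\le 2^{-\alpha}\rho_I^{2\alpha}$. Hence every $g$ on $X$ satisfies $\lVert g\rVert_{\Lambda_{2\alpha}(X)}\le 2^{-\alpha}\lVert g\rVert_{\Lambda_\alpha(X)}$. The Lipschitz hypothesis matters only if you want constants independent of $L$ and $|X|$, which the theorem does not claim.

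A minor point: $\sum_{l\ge\hat l}\theta^{2\alpha(l-\hat l)}$ converges for every $\alpha>0$, so $\alpha<\tfrac12$ plays no role in Step 3.
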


\begin{proof}
Proved with the same technology in \cite{fasina2025quasilinearization} which consists of (i) writing $A(f)$ as a telescoping series with respect to the scaling parameter $l$ of the averaging operator, $P^l$, (ii) constructing an interpolation operator permitting one to appeal to the fundamental theorem of calculus (iii) measuring the residual in $L^{\infty}$ to establish $2\alpha$-H\"older regularity. The regularity estimates for the residual, $\Delta_{L}(A,f)$, are acquired with identical techniques deployed for the regularity estimates of the residual, $\Delta_{L,S}(A,f)$, of the paraproduct decomposition stated in Theorem \ref{thm:4.2}, which are handled explicitly in Section 5.
\end{proof}

\begin{theorem}
Suppose  $A \in \mathcal{C}^2(\mathbb{R})$, $ f \in  \Lambda_{\alpha}(X \times Y), 0 < \alpha < \frac{1}{2}$, then for the operator $T: f \to A(f)$ we can approximate $A(f)$ with

\begin{align}
& \tilde{A}_{L,S}(f) =  \sum_{l=0}^{L} \sum_{s=0}^{S}  A'(P^lP^s(f)) Q^lQ^s(f) + A''(P^lP^s(f))Q^lP^s(f)P^lQ^s(f) \in \Lambda_{\alpha}(X \times Y)
\label{eq:2}
\end{align}

such that the hierarchical tensor paraproduct transforms $T : f \to A(f)$ to 

\begin{align}
\Pi^{L,S}_{A',A''} : f \to \tilde{A}_{L,S}(f) + \Delta_{L,S}(A,f)
\label{eq:3}
\end{align}

where $\Delta_{L,S}(A,f) = A(f) - \tilde{A}_{L,S}(f) \in \Lambda_{2\alpha}(X \times Y)$ is a residual with twice the regularity of $f$ and 

\begin{align}
\lVert \Delta_{L,S}(A,f) \rVert_{\Lambda_{2\alpha}(X \times Y)} \leq C_A \lVert f \rVert_{\Lambda_{\alpha}(X \times Y)}
\end{align}
\label{thm:4.2}
\end{theorem}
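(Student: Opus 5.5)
The plan follows the three-step scheme described for Theorem \ref{thm:4.1}, now in the tensor setting: a double telescoping series, a bilinear interpolation with a second-order Taylor expansion, and a coefficient-decay estimate closed by Lemma \ref{lem:3.11}.

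\textbf{Step 1: double telescoping.} Write $P^{l+1}$ for the averaging at the next finer scale, so that (with the paper's conventions) $Q^l = P^{l+1}-P^l$ in each variable. Since the finest partitions are singletons, $P^{L+1}P^{S+1}f=f$. The coarsest averages are constants, so their contribution is absorbed into a constant, which is harmless for Hölder seminorms. Hence
\begin{align}
A(f) \simeq \sum_{l=0}^{L}\sum_{s=0}^{S} \Big[ A(P^{l+1}P^{s+1}f) - A(P^{l+1}P^{s}f) - A(P^{l}P^{s+1}f) + A(P^lP^sf) \Big] \nonumber
\end{align}
modulo one-variable telescoping terms. Those are handled exactly as in Theorem \ref{thm:4.1}, applied in each variable separately.

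\textbf{Step 2: interpolation and Taylor expansion.} Fix $(l,s)$ and a cell $X^{l+1}_{k,j}\times Y^{s+1}_{r,i}$, and set $a=P^lP^sf$, $u=Q^lP^sf$, $v=P^lQ^sf$, $w=Q^lQ^sf$. Then $P^{l+1}P^{s+1}f=a+u+v+w$, $P^{l+1}P^sf=a+u$ and $P^lP^{s+1}f=a+v$. The bracket therefore equals the mixed difference $A(a+u+v+w)-A(a+u)-A(a+v)+A(a)$. Expanding with the fundamental theorem of calculus in the interpolation parameters $t\mapsto a+tu+\tau v$ gives
\begin{align}
A'(a)w + A''(a)uv + E_{l,s}, \nonumber
\end{align}
where the error $E_{l,s}$ is bounded by $|w|\,\omega_{A'}(|u|+|v|+|w|)$ plus $|u||v|\,\omega_{A''}(|u|+|v|)$, and $\omega$ denotes a modulus of continuity. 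Summing the main terms over $(l,s)$ yields exactly $\tilde A_{L,S}(f)$, so $\Delta_{L,S}(A,f)=\sum_{l,s}E_{l,s}$, up to the one-variable pieces.

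\textbf{Step 3: coefficient decay and regularity.} The needed sizes follow from Definition \ref{def:3.2}: $|u|\lesssim \|f\|\,|X^l_k|^{\alpha}$, $|v|\lesssim\|f\|\,|Y^s_r|^{\alpha}$ and $|w|\lesssim\|f\|\,|X^l_k|^{\alpha}|Y^s_r|^{\alpha}$. Lemma \ref{lem:3.10} gives the normalized counterpart of the bound on $w$. Consequently each term satisfies $|E_{l,s}|\lesssim C_A\|f\|\,|X^l_k\times Y^s_r|^{2\alpha}$ on its cell. After the $L^2$ normalization used there, this matches the hypothesis of Lemma \ref{lem:3.11} with exponent $2\alpha+1$. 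Expanding $E_{l,s}$ in the cell characteristic functions and applying Lemma \ref{lem:3.11} then gives $\Delta_{L,S}\in\Lambda_{2\alpha}(X\times Y)$ with $\|\Delta_{L,S}\|_{\Lambda_{2\alpha}}\le C_A\|f\|_{\Lambda_\alpha}$. The constraint $\alpha<\tfrac12$ ensures $2\alpha<1$, so the geometric sums in that lemma's proof converge.

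\textbf{Main obstacle.} The difficulty lies in Step 3. The error $E_{l,s}$ is not literally piecewise constant on $(l+1,s+1)$ cells with a single coefficient. Moreover, the bound on $|E_{l,s}|$ must be upgraded to a bound on its mixed second differences, as Definition \ref{def:3.2} requires.

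I would first try to handle this on the finite sets by using that every operator $P,Q$ is constant on cells. Each $E_{l,s}$ is then exactly a step function on $X^{l+1}\times Y^{s+1}$, so it fits the form assumed in Lemma \ref{lem:3.11}.

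The remaining concern is the nonlinear residual in the product $A''(a)uv$. Its smallness requires a modulus of continuity of $A''$, which is not quantitative for a merely $C^2$ function $A$. I would absorb this into $C_A$ using $\sup|A''|$ and the finiteness of $X\times Y$, so that $L$ and $S$ are finite. The constant then depends on the tree, consistent with the $C_{L,S}$ that appears in Lemma \ref{lem:3.11}.
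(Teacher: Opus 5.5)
Your outline is the paper's own: a double telescoping in $P^lP^s$, a bilinear interpolation with the fundamental theorem of calculus (your path $a+tu+\tau v$ followed by the $w$-increment is a harmless variant of $h_{\mu,\omega}$), and cellwise $L^\infty$ bounds at each scale $(l,s)$ fed into Lemma \ref{lem:3.11}, as in Proposition \ref{prop:5.1}. Your Step 2 expansion and your sizes for $u,v,w$ are correct. Since each $E_{l,s}$ is constant on $(l+1,s+1)$-cells, and only scales at or below the splitting scales contribute to a mixed difference, a bound $|E_{l,s}|\lesssim |X^l_k|^{2\alpha}|Y^s_r|^{2\alpha}$ would indeed give the mixed condition.

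\textbf{The gap.} That bound does not follow. Work in the normalized convention $|X^l_k|\sim 2^{-l}$, which is the one in which Lemma \ref{lem:3.10} and the norms of Section 5 are meaningful. Since $\omega_{A'}(t)\le \sup|A''|\,t$, your first error term contains $|w||u|\lesssim |X^l_k|^{2\alpha}|Y^s_r|^{\alpha}$, and symmetrically $|w||v|$. These gain in one variable only. The second error term is merely $o(|X^l_k|^{\alpha}|Y^s_r|^{\alpha})$.

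You flagged the second defect but not the first, and the first persists even for polynomial $A$. For $A(t)=t^2$ one has exactly $E_{l,s}=2uw+2vw+w^2$. Take $f=g(x)h(y)$ and use the identity $2P^sh\,Q^sh=(P^{s+1}h)^2-(P^sh)^2-(Q^sh)^2$ and its analogue in $x$. Modulo functions of one variable, these terms then sum to $G(x)h(y)^2+g(x)^2H(y)-G(x)H(y)$, where $G=\sum_l (Q^lg)^2$ and $H=\sum_s (Q^sh)^2$. Let $\rho_x,\rho_y$ denote the tree distances in each variable. For suitable $g,h$ (e.g.\ $H$ constant, $G$ not), the mixed differences are of order $\rho_x^{2\alpha}\rho_y^{\alpha}$ rather than $\rho_x^{2\alpha}\rho_y^{2\alpha}$. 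So no sharper bookkeeping closes Step 3 with constants uniform in the depth of the trees.

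\textbf{The proposed repair is vacuous.} You absorb the deficit into a constant depending on $L,S$ via finiteness. But on a finite product every function satisfies Definition \ref{def:3.2} for every exponent with a tree-dependent constant, so no gain from $\alpha$ to $2\alpha$ is proved. In the literal cardinality convention, where $|X^l_k|\ge 1$, your Step 3 bound does hold, but only because raising the exponent weakens the condition there.

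\textbf{Two further steps fail as written.} First, the boundary terms $A(P^{L+1}P^0f)+A(P^0P^{S+1}f)-A(P^0P^0f)$ cannot be handled ``as in Theorem \ref{thm:4.1}''. The approximant $\tilde A_{L,S}(f)$ contains no one-variable paraproduct terms, so these terms remain in $\Delta_{L,S}(A,f)$ in full. They are invisible to mixed differences but not to the second and third conditions of Definition \ref{def:3.2}. Second, the remainder is at least quadratic in $f$: for $A(t)=t^2$, $\Delta_{L,S}(A,\lambda f)=\lambda^2\Delta_{L,S}(A,f)$. Hence a bound linear in $\|f\|$ forces $C_A$ to depend on $f$.

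For calibration, the paper's Proposition \ref{prop:5.1} does not supply the missing estimate either. It reaches $2^{-(l+s)(2\alpha+1)}$ only by bounding $\beta^{l,s}$ and $\gamma^{l,s}$ as if they decayed in both $l$ and $s$. Your own (correct) sizes for $u$ and $v$ show that this decay is not available.
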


\begin{proof}

The approximation $\tilde{A}_{L,S}(f)$ is morally alligned with the one developed in \cite{fasina2025quasilinearization}, but computation of the hierarchical tensor operators, $P^lP^s, Q^lP^s, P^lQ^s, Q^lQ^s$, is distinct as the support $f$ is $X \times Y$ and relatedely, measuring $\Delta_{L,S}(A,f)$ in $L^{\infty}$ to establish the $2 \alpha$-H\"older regularity requires more care as multiple Haar-like functions are associated with each node in the partitions of $X \times Y$. Once $P^lP^s, Q^lP^s, P^lQ^s, Q^lQ^s$ are built we follow the steps of \cite{fasina2025quasilinearization} to build the approximation: (1) write $A(f)$ as a telescoping series with respect to the hierarchical tensor scaling operators, $P^lP^s$, (2) recover the approximation by building the interpolation operator, $h_{\mu,\omega}$, defined as:

\begin{align}
& h_{\mu,\omega}(f) \coloneq  h(\omega, \mu, P^{l+1}P^{s+1}(f), P^{l}P^{s+1}(f), P^{l+1}P^{s}(f), P^{l}P^{s}(f)) \nonumber \\
& = \omega( P^{l+1}P^{s}(f) - P^{l}P^{s}(f) ) + \mu  ((P^{l}P^{s+1}(f) +  \nonumber \\
& \omega(P^{l+1}P^{s+1}(f) - P^{l}P^{s+1}(f))) - (P^{l}P^{s}(f) + \omega(P^{l+1}P^{s}(f) - P^{l}P^{s}(f))))
\end{align}

to recover the fundamental theorem of calculus with respect to the scaling parameters $(l,s)$:

\begin{align}
\tilde{A}_{L,S}(f) =  \sum_{l=0}^{L} \sum_{s=0}^{S}  A'(P^lP^s(f)) Q^lQ^s(f) + A''(P^lP^s(f))Q^lP^s(f)P^lQ^s(f)  
\end{align}

(3) compute the residual

\begin{align}
& \Delta_{L,S}(A,f) \coloneq  A(f) - \tilde{A}_{L,S}(f) \nonumber \\
& = \sum_{l=0}^{L} \sum_{s=0}^{S} \int_0^1 \int_0^1 A'(P^lP^s(f) + h_{\mu,\omega}(f))\mathbf{v}_1^{l,s}(f) - A'(P^lP^s(f))\mathbf{v}_1^{l,s}(f) \nonumber \\
& + A''(P^lP^s(f) + h_{\mu,\omega}(f))\mathbf{v}_2^{l,s}(f) -  A''(P^lP^s(f)\tilde{\mathbf{v}}_2^{l,s}(f) d\mu d \omega
\end{align}

where

\begin{align}
\textbf{v}_1^{l,s}(f) \coloneq Q^lQ^{s}(f)
\end{align}

\begin{align}
\textbf{v}_2^{l,s}(f) \coloneq (P^lQ^{s}(f) + \omega Q^lQ^s(f))(Q^lP^{s}(f) + \mu Q^lQ^{s}(f))
\end{align}

\begin{align}
\tilde{\textbf{v}}_2^{l,s}(f) \coloneq Q^lP^{s}(f)P^lQ^{s}(f)
\end{align}

and measure $\Delta_{L,S}(A,f)$ in $L^{\infty}$ to obtain the claim that its regularity is twice as smooth as the data, $f$. Steps (1-2) are identical to \cite{fasina2025quasilinearization} and are therefore omitted, while step (3) is obtained by appealing to Proposition \ref{prop:5.2} which builds on Proposition \ref{prop:5.1} proved in Section 5. Finally the claim, $\lVert \Delta_{L,S}(A,f) \rVert_{2 \alpha}(X \times Y)$ is obtained by observing Corollary \ref{cor:5.3}.

\end{proof}

\section{Residual Estimates}

We show $ \Delta_{L,S}(A,f) \in \Lambda_{2\alpha}(X \times Y)$ by first proving  the following proposition:

\begin{proposition}
\begin{align}
\lVert \Delta_{L,S}(A,f) \lVert_{L^{\infty}( X \times Y )} = \sum_{l=0}^L \sum_{s=0}^S \eta^{l,s}_{k,j,r,i} , |\eta^{l,s}_{k,j,r,i}| \leq 2^{-(l+s)(2\alpha + 1)}
\end{align}
\label{prop:5.1}
\end{proposition}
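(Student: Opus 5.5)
The plan is to start from the explicit residual formula obtained in the proof of Theorem \ref{thm:4.2}. There $\Delta_{L,S}(A,f)$ is a double sum over scales $(l,s)$ of integrals in $(\mu,\omega)\in[0,1]^2$. Each term is supported nodewise on the rectangles $X^{l+1}_{k,j}\times Y^{s+1}_{r,i}$, since every hierarchical operator in Definitions \ref{def:3.7} and \ref{def:3.9} is piecewise constant there. For fixed $(l,s)$ I define $\eta^{l,s}_{k,j,r,i}$ as the constant value of the $(l,s)$ summand on that rectangle. This realizes $\Delta_{L,S}(A,f)$ as a sum of the form in Lemma \ref{lem:3.11}. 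It then remains to bound each $|\eta^{l,s}_{k,j,r,i}|$ uniformly in $(k,j,r,i)$ by $C\,2^{-(l+s)(2\alpha+1)}$, where $C$ depends on $A$ and $\lVert f\rVert_{\Lambda_\alpha}$. I read the displayed identity in Proposition \ref{prop:5.1} as this decomposition together with the coefficient bound, with the $L^\infty$ norm controlled by the sum. I will also assume, as the paper implicitly does via the factor $2^{-(l+s)}$, that the trees are balanced, so that $|X^l_k\times Y^s_r|\simeq 2^{-(l+s)}$ in normalized measure.

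\textbf{Step 1: coefficient decay of $f$.} Lemma \ref{lem:3.10} gives $|\alpha^{l,s}|\lesssim 2^{-(l+s)(\alpha+1/2)}$ for the wavelet--wavelet coefficients. Repeating the same argument one variable at a time, using the single-variable Hölder conditions in Definition \ref{def:3.2}, gives $|\beta^{l,s}|\lesssim 2^{-l(\alpha+1/2)}$ and $|\gamma^{l,s}|\lesssim 2^{-s(\alpha+1/2)}$. Up to the normalization used in the operators, this means $Q^lQ^s f=O(2^{-(l+s)\alpha})$, $Q^lP^s f=O(2^{-l\alpha})$ and $P^lQ^s f=O(2^{-s\alpha})$ pointwise. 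Likewise, $h_{\mu,\omega}(f)$ is a combination of the increments $P^{l+1}P^s f-P^lP^sf$ and their mixed differences. It is therefore $O(2^{-l\alpha}+2^{-s\alpha})$, with its mixed part of size $O(2^{-(l+s)\alpha})$.

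\textbf{Step 2: the two difference terms.} The first term is $[A'(P^lP^sf+h_{\mu,\omega})-A'(P^lP^sf)]Q^lQ^sf$. By the mean value theorem for $A'$, using $A\in C^2$, it is bounded by $\lVert A''\rVert_\infty |h_{\mu,\omega}|\,|Q^lQ^sf|$. The second term I split as
\[
A''(P^lP^sf+h_{\mu,\omega})(\mathbf v_2^{l,s}-\tilde{\mathbf v}_2^{l,s})+\big[A''(P^lP^sf+h_{\mu,\omega})-A''(P^lP^sf)\big]\tilde{\mathbf v}_2^{l,s}.
\]
Expanding $\mathbf v_2-\tilde{\mathbf v}_2$ leaves only products containing at least one factor $Q^lQ^sf$ multiplied by $Q^lP^sf$, $P^lQ^sf$ or $Q^lQ^sf$. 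These products are $O(2^{-(l+s)\alpha}(2^{-l\alpha}+2^{-s\alpha}))$ by Step 1. The bracket containing $A''$ needs a modulus of continuity of $A''$; on the compact range of $f$ this is only uniform continuity. There I intend to use instead the product $\tilde{\mathbf v}_2=Q^lP^sf\,P^lQ^sf=O(2^{-(l+s)\alpha})$, together with the smallness of $h$.

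\textbf{Main obstacle.} The crux is upgrading these bounds to the full $2^{-(l+s)(2\alpha+1)}$. Step 2 only yields roughly $2^{-(l+s)\alpha}\cdot 2^{-\min(l,s)\alpha}$. The missing factor $2^{-(l+s)(\alpha+1)}$ has to come from the normalization, in which coefficients are weighted by node size $|X^{l+1}_{k,j}\times Y^{s+1}_{r,i}|\simeq 2^{-(l+s)}$, and from a second-order cancellation. I would first try to exploit that the mixed part of $h_{\mu,\omega}$ is the only part of the $A'$-difference that survives the rectangle double-difference in Lemma \ref{lem:3.11}. This would give the product $|h^{\mathrm{mixed}}||Q^lQ^sf|\lesssim 2^{-2(l+s)\alpha}$, after which I would absorb the volume factor. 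The cross terms of size $2^{-l\alpha}2^{-(l+s)\alpha}$ are the weak point. They may force either a stronger smoothness assumption on $A$ ($A''$ Lipschitz) or the restriction $\alpha<1/2$ to make the remaining geometric sums summable. Finally, summing the geometric series in $(l,s)$ gives the stated $L^\infty$ representation. This sets up the input to Lemma \ref{lem:3.11} needed in Proposition \ref{prop:5.2} and Corollary \ref{cor:5.3}.
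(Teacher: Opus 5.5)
Your proposal has a genuine gap at the point that carries the content of the proposition: the per-scale bound $|\eta^{l,s}_{k,j,r,i}|\lesssim 2^{-(l+s)(2\alpha+1)}$ is never established. You define $\eta^{l,s}_{k,j,r,i}$ as the pointwise value of the $(l,s)$-summand, which is also how the paper uses it. With that definition, your Step 2 gives at best $2^{-(l+s)\alpha}(2^{-l\alpha}+2^{-s\alpha})$ for the $A'$-difference and for $\mathbf{v}_2^{l,s}-\tilde{\mathbf{v}}_2^{l,s}$. For $[A''(P^lP^sf+h_{\mu,\omega})-A''(P^lP^sf)]\tilde{\mathbf{v}}_2^{l,s}$ it gives only $o(1)\cdot 2^{-(l+s)\alpha}$, since $A''$ is merely continuous. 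Your last paragraph offers hopes rather than steps:
the rectangle double difference in Lemma \ref{lem:3.11} concerns the regularity of the assembled sum, not the size of an individual coefficient, which is that lemma's hypothesis;
a pointwise value of a piecewise-constant function carries no volume factor to absorb;
and summing over $(l,s)$ cannot improve a per-scale bound.

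Sharper estimates or stronger hypotheses on $A$ will not rescue this route either. With the identification $P^{l+1}-P^l=Q^l$ implicit in your Step 1, one has $h_{\mu,\omega}(f)=\omega\,Q^lP^sf+\mu\,P^lQ^sf+\mu\omega\,Q^lQ^sf$. So for $A(u)=u^2/2$, where $A''\equiv 1$, the $(l,s)$-summand integrates exactly to
\[
Q^lQ^sf\,\bigl(Q^lP^sf+P^lQ^sf+\tfrac12\,Q^lQ^sf\bigr).
\]
Take $f(x,y)=g(x)g(y)$ with $g\ge 1$ and $|Q^lg|\equiv 2^{-l\alpha}$. For instance, let $g$ be a large constant plus $\sum_l 2^{-l\alpha}\epsilon_l$, where $\epsilon_l$ is $+1$ on one child and $-1$ on the other of every level-$l$ node. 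Then the summand is at least $2^{-(l+s)\alpha}(2^{-l\alpha}+2^{-s\alpha})$ on the quarter of $X\times Y$ where $Q^lg(x)>0$ and $Q^sg(y)>0$. At $l=0$ that is $\ge 2^{-s\alpha}$, against the claimed $2^{-s(2\alpha+1)}$. So your Step 2 bounds are sharp, and no constant independent of $(l,s)$ can yield the stated rate. Neither a Lipschitz $A''$ nor the restriction $\alpha<\tfrac12$ changes this.

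The paper reaches the exponent by a different device. It bounds the four terms of the residual separately, never using the difference structure you exploit. On each node $X^l_k\times Y^s_r$ it replaces $A'(P^l_kP^s_rf+Q^l_kQ^s_rf)$ (respectively $A''(\cdot)$) by an increment of $A$ (respectively $A'$) between two points of the node, citing that $A\in C^2$ acts on a compact set. The Lipschitz bound turns this into an increment of $P^lP^sf+Q^lQ^sf$. That increment is bounded by Lemma \ref{lem:3.10} and multiplied by the Lemma \ref{lem:3.10} bound on $\alpha^{l,s}_{k,j,r,i}$, giving $2^{-(l+s)(\alpha+\frac12)}\cdot 2^{-(l+s)(\alpha+\frac12)}$.

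Replacing a derivative value by an increment is not an identity. As written, the resulting coefficient difference $(\omega^{l,s}_{k,j,r,i}+\alpha^{l,s}_{k,j,r,i})-(\omega^{l,s}_{k,j,r,i}+\alpha^{l,s}_{k,j,r,i})$ vanishes identically. Moreover, the two factors $2^{-(l+s)(\alpha+\frac12)}$ are $L^2$-normalized coefficient bounds used as pointwise sizes of averaged coefficients, which are only of size $2^{-(l+s)\alpha}$. So you are not missing a usable idea from the paper. Your analysis shows that the stated per-scale rate is not attainable, and a correct version of the proposition would have to assert a weaker bound of the kind your Step 2 delivers.
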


\begin{proof}

Begin with arbitrary scales $(l,s)$ and begin with the first term in $\Delta(A,f)$ to obtain:

\begin{align}
& \lVert \int_0^1 \int_0^1 A'(P^lP^s(f) + h_{\mu,\omega}(l,s,f))\mathbf{v}_1^{l,s}(f) d\mu d\omega \rVert_{L^{\infty}(X \times Y)} \nonumber \\
& \leq \lVert \sup_{\mu,\omega \in [0,1]} A'(P^lP^s(f) + h_{\mu,\omega}(l,s,f))\mathbf{v}_1^{l,s}(f) \rVert_{L^{\infty}(X \times Y)} \nonumber \\
& = \lVert A'(P^lP^s(f) + Q^lQ^s(f))Q^lQ^s(f) \rVert_{L^{\infty}(X \times Y)}
\end{align}

which comes from computing the supremum of the interpolation operator $h_{\mu,\omega}(f)$ with respect to the scaling parameters $\mu,\omega$ and making the substitution $\mathbf{v}_1^{l,s}(f) \coloneq Q^lQ^s(f)$. Continuing, we have:

\begin{align}
& \lVert A'(P^lP^s(f) + Q^lQ^s(f))Q^lQ^s(f) \rVert_{L^{\infty}(X \times Y)} \nonumber \\
& \leq \sum_{k=0}^{n(l)} \sum_{r=0}^{n(s)} \lVert A'(P_k^lP_r^s(f) + Q_k^lQ_r^s(f))Q_k^lQ_r^s(f) \rVert_{L^{\infty}(X^l_k \times Y^s_r)} \nonumber \\
& = \sum_{k=0}^{n(l)} \sum_{r=0}^{n(s)} \lVert A(P_k^lP_r^s(f)(x,y) + Q_k^lQ_r^s(f))(x,y) - A(P_k^lP_r^s(f)(x',y') + Q_k^lQ_r^s(f)(x',y')) Q_k^lQ_r^s(f) \rVert_{L^{\infty}(X^l_k \times Y^s_r)} \nonumber \\
& \leq \sum_{k=0}^{n(l)} \sum_{r=0}^{n(s)} \lVert ((P_k^lP_r^s(f)(x,y) + Q_k^lQ_r^s(f)(x,y)) - (P_k^lP_r^s(f)(x'
,y') + Q_k^lQ_r^s(f)(x',y'))) Q_k^lQ_r^s(f) \rVert_{L^{\infty}(X^l_k \times Y^s_r)} \nonumber \\
& = \sum_{k=0}^{n(l)} \sum_{r=0}^{n(s)} \sum_{j=1}^{\tilde{H}(l,k)} \sum_{i=1}^{\tilde{H}(s,r)} \lVert ((\omega^{l,s}_{k,j,r,i} + \alpha^{l,s}_{k,j,r,i} ) - (\omega^{l,s}_{k,j,r,i} + \alpha^{l,s}_{k,j,r,i}))\alpha^{l,s}_{k,j,r,i} \chi_{(X^l_{ k,j} \times Y^s_{r,i})}\rVert_{L^{\infty}(X^l_{ k,j} \times Y^s_{r,i})} \nonumber \\
& \leq C_{L,S}  \sum_{k=0}^{n(l)} \sum_{r=0}^{n(s)} \sum_{j=1}^{\tilde{H}(l,k)} \sum_{i=1}^{\tilde{H}(s,r)} 2^{-(l+s)(\alpha + \frac{1}{2})} 2^{-(l+s)(\alpha + \frac{1}{2})}  \nonumber \\
& \leq C_{L,S}2^{-(l+s)(2\alpha + 1)}
\end{align}

where the first step holds from triangle inequality, the second and third steps since $A \in C^2$ and acts on a compact set, and the final step from appealing to estimates on the expansion coefficients of H\"older functions given by Lemma \ref{lem:3.10}. The rest of the terms are estimated using the same arguments dispatched for the first term: Leveraging the regularity of $A$ to bound the action of its derivative by its support and appealing to Lemma \ref{lem:3.10}. We proceed with second term in $\Delta_{L,S}(A,f)$

\begin{align}
& \lVert \int_0^1 \int_0^1 A'(P^lP^s(f))\mathbf{v}_1^{l,s}(f) d\mu d \omega \rVert_{L^{\infty} (X \times Y) } \nonumber \\
& = \lVert A'(P^lP^s(f)) Q^lQ^s(f) d\mu d \omega \rVert_{L^{\infty} (X \times Y) } \nonumber \\
& \leq \sum_{ k=1}^{n(l)} \sum_{r=1}^{n(s)} \lVert A'(P_k^lP_r^s(f)) Q_k^lQ_r^s(f) \rVert_{L^{\infty} (X^l_k \times Y^s_r) } \nonumber \\
& = \sum_{ k=1}^{n(l)} \sum_{r=1}^{n(s)} \lVert (A(P_k^lP_r^s(f))(x,y) - A(P_k^lP_r^s(f))(x',y')) Q_k^lQ_r^s(f) \rVert_{L^{\infty} (X^l_k \times Y^s_r) } \nonumber \\
& \leq \sum_{ k=1}^{n(l)} \sum_{r=1}^{n(s)} \lVert ((P_k^lP_r^s(f))(x,y) - (P_k^lP_r^s(f))(x',y')) Q_k^lQ_r^s(f) \rVert_{L^{\infty} (X^l_k \times Y^s_r) } \nonumber \\
& = \sum_{ k=1}^{n(l)} \sum_{r=1}^{n(s)} \sum_{j=1}^{\tilde{H}(l,k)} \sum_{i=1}^{\tilde{H}(s,r)}  \lVert (\omega^{l,s}_{k,j,r,i} - \omega^{l,s}_{k,j,r,i})\alpha^{l,s}_{k,j,r,i} \chi_{(X^l_{ k,j} \times Y^s_{r,i})}) \rVert_{L^{\infty} (X^l_{ k,j} \times Y^s_{r,i}) } \nonumber \\
& \leq  C_{L,S} 2^{-(l+s)(2\alpha + 1)}
\end{align}

For the third term we get:

\begin{align}
& \lVert \int_0^1 \int_0^1 A''(P^lP^s(f) + h_{\mu,\omega}(l,s,f))\mathbf{v}_2^{l,s}(f) d \mu d \omega \rVert_{L^{\infty}(X \times Y)} \nonumber \\
& \leq \lVert \sup_{\mu, \omega \in [0,1]} \int_0^1 \int_0^1 A''(P^lP^s(f) + h_{\mu,\omega}(l,s,f))\mathbf{v}_2^{l,s}(f) d \mu d \omega \rVert_{L^{\infty}(X \times Y)} \nonumber \\
& = \lVert  A''(P^lP^s(f) + Q^lQ^s(f))(P^lQ^{s}(f) +  Q^lQ^s(f))(Q^lP^{s}(f) +  Q^lQ^{s}(f)) \rVert_{L^{\infty}(X^l_k \times Y^s_r)} \nonumber \\
& \leq \sum_{k=1}^{n(l)} \sum_{r=1}^{n(s)} \lVert  A''(P_k^lP_r^s(f) + Q_k^lQ_r^s(f))(P_k^lQ_r^{s}(f) +  Q_k^lQ_r^s(f))(Q_k^lP_r^{s}(f) +  Q_k^lQ_r^{s}(f)) \rVert_{L^{\infty}(X^l_k \times Y^s_r)} \nonumber \\
& \leq \sum_{k=1}^{n(l)} \sum_{r=1}^{n(s)} \lVert  (P_k^lP_r^s(f)(x,y) + Q_k^lQ_r^s(f)(x,y)) - (P_k^lP_r^s(f)(x',y') + Q_k^lQ_r^s(f)(x',y')) \nonumber \\
& (P_k^lQ_r^{s}(f) +  Q_k^lQ_r^s(f))(Q_k^lP_r^{s}(f) +  Q_k^lQ_r^{s}(f)) \rVert_{L^{\infty}(X^l_k \times Y^s_r)} \nonumber \\
& = \sum_{k=1}^{n(l))} \sum_{r=1}^{n(s)} \sum_{j=1}^{\tilde{H}(l,k)} \sum_{i=1}^{\tilde{H}(s,r)} \lVert  ((\omega^{l,s}_{k,j,r,i} + \alpha^{l,s}_{k,j,r,i}) -(\omega^{l,s}_{k,j,r,i} + \alpha^{l,s}_{k,j,r,i}) )(\gamma^{l,s}_{k,j,r,i} + \alpha^{l,s}_{k,j,r,i})(\beta^{l,s}_{k,j,r,i} + \alpha^{l,s}_{k,j,r,i}) \nonumber \\
&\chi_{(X^l_{ k,j} \times Y^s_{r,i})}) \rVert_{L^{\infty}({\chi_{(X^l_{ k,j} \times Y^s_{r,i})} } ) } \nonumber \\
& \leq  C_{L,S} \sum_{k=1}^{n(l)}\sum_{r=1}^{n(s)} 2^{-(l+s)(3\alpha + 1.5)} \nonumber \\ 
& \leq C_{L,S}2^{-(l+s)(2\alpha + 1)}
\end{align}

and for the fourth term:

\begin{align}
& \lVert A''(P^lP^s(f))\tilde{\mathbf{v}}_2^{l,s}(f) \rVert_{L^{\infty}(X \times Y)} = \lVert A''(P^lP^s(f)) Q^lP^{s}(f)P^lQ^{s}(f) \rVert_{L^{\infty}(X \times Y)} \nonumber \\
& \leq \sum_{k=1}^{n(l)} \sum_{r=1}^{n(s)} \lVert A''(P_k^lP_r^s(f))Q_k^lP_r^{s}(f)P_k^lQ_r^{s}(f)\rVert_{L^{\infty}(X^l_k \times Y^s_r)} \nonumber \\
& = \sum_{k=1}^{n(l)} \sum_{r=1}^{n(s)} \lVert (A'(P_k^lP_r^s(f)(x,y)) -  A'(P_k^lP_r^s(f)(x',y'))) Q_k^lP_r^{s}(f)P_k^lQ_r^{s}(f)\rVert_{L^{\infty}(X^l_k \times Y^s_r)} \nonumber \\
& \leq \sum_{k=1}^{n(l)} \sum_{r=1}^{n(s)} \lVert ((P_k^lP_r^s(f)(x,y)) -  (P_k^lP_r^s(f)(x',y'))) Q_k^lP_r^{s}(f)P_k^lQ_r^{s}(f)\rVert_{L^{\infty}(X^l_k \times Y^s_r)} \nonumber \\
& = \sum_{k=1}^{n(l)} \sum_{r=1}^{n(s)} \sum_{j=1}^{\tilde{H}(l,k)} \sum_{i=1}^{\tilde{H}(s,r)} \lVert (\omega^{l,s}_{k,j,r,i} - \omega^{l,s}_{k,j,r,i})\beta^{l,s}_{k,j,r,i}\gamma^{l,s}_{k,j,r,i} \chi_{(X^l_{ k,j} \times Y^s_{r,i})} \rVert_{L^{\infty}(X^l_{ k,j} \times Y^s_{r,i})} \nonumber \\
&  \leq C_{L,S} \sum_{k=1}^{n(l)} \sum_{r=1}^{n(s)}  \sum_{j=1}^{\tilde{H}(l,k)} \sum_{i=1}^{\tilde{H}(s,r)}  2^{-(l+s)(3 \alpha + 1.5)} \nonumber \\
&  \leq C_{L,S} 2^{-(l+s)(2\alpha + 1)}
\end{align}

Combining the estimates from the preceding equations and summing over scales permits us to write the residual as:

\begin{align}
& \Delta_{L,S}(A,f) \coloneq \sum_{l=0}^{L} \sum_{s=0}^{S} \sum_{k=1}^{n(l)} \sum_{r=1}^{n(s)} \sum_{j=1}^{\tilde{H}(l,k)} \sum_{i=1}^{\tilde{H}(s,r)} \eta^{l,s}_{k,j,r,i} \chi_{(X^l_{k,j} \times Y^s_{r,i})}, | \eta^{l,s}_{k,j,r,i} | \leq 2^{-(l+s)( 2 \alpha+1)} 
\end{align}

where $\eta^{l,s}_{k,j,r,i}$ is the expansion coefficient associated with $X^l_k \times Y^s_r$ and $\chi_{(X^l_{k,j} \times Y^s_{r,i})}$ is the characteristic function on $X^l_k \times Y^s_r$.

\end{proof}

\begin{proposition}
$\Delta_{L,S}(A,f) \in \Lambda_{2\alpha}(X \times Y) $
\label{prop:5.2}
\end{proposition}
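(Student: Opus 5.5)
The plan is to combine Proposition \ref{prop:5.1} with Lemma \ref{lem:3.11}. Proposition \ref{prop:5.1} already writes the residual as a hierarchical expansion
\begin{align*}
\Delta_{L,S}(A,f) = \sum_{l=0}^{L} \sum_{s=0}^{S} \sum_{k=1}^{n(l)} \sum_{r=1}^{n(s)} \sum_{j=1}^{\tilde{H}(l,k)} \sum_{i=1}^{\tilde{H}(s,r)} \eta^{l,s}_{k,j,r,i} \chi_{(X^l_{k,j} \times Y^s_{r,i})},
\end{align*}
with coefficients controlled by $2^{-(l+s)(2\alpha+1)}$ up to the constant $C_{L,S}$. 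This is exactly the form of the function $g$ in Lemma \ref{lem:3.11}. The argument therefore reduces to checking that the hypothesis of that lemma, $|\eta^{l,s}_{k,j,r,i}| \leq |X^l_k \times Y^s_r|^{2\alpha+1}$, follows from the dyadic bound.

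\textbf{Step 1.} Fix scales $(l,s)$ and the four terms produced in the residual. Identify each term's localized coefficient on $X^l_{k,j}\times Y^s_{r,i}$ as a product of expansion coefficients. For the first two terms this is a difference of tensor scaling coefficients times $\alpha^{l,s}_{k,j,r,i}$. For the $A''$ terms it is a difference times products of $\beta$, $\gamma$ and $\alpha$ coefficients. Using Lemma \ref{lem:3.10} together with its one-variable analogues for $\beta$ and $\gamma$, each factor $\alpha,\beta,\gamma$ is bounded by $C\lVert f\rVert_{\Lambda_\alpha}|X^l_k\times Y^s_r|^{\alpha+1/2}$. Since $A'$ and $A''$ are bounded on the compact range of $f$, and the averages $P^lP^s(f)$ stay in that range, the product of two such factors gives $|\eta| \lesssim C_A \lVert f\rVert_{\Lambda_\alpha}|X^l_k\times Y^s_r|^{2\alpha+1}$. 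The cubic terms give an even smaller bound, because $|X^l_k\times Y^s_r|\le |X\times Y|$ is bounded.

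\textbf{Step 2.} Once the coefficient bound holds after normalizing by $C_A\lVert f\rVert$, apply Lemma \ref{lem:3.11} directly to conclude $\Delta_{L,S}(A,f)\in\Lambda_{2\alpha}(X\times Y)$. Tracking constants in the same argument also yields the norm bound needed for Corollary \ref{cor:5.3}.

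\textbf{Main obstacle.} The hard part is reconciling node size $|X^l_k\times Y^s_r|$ with the dyadic factor $2^{-(l+s)}$ used in Proposition \ref{prop:5.1}. In a general partition tree these are not comparable unless the tree is balanced, meaning each child has size at least a fixed fraction of its parent. I would first try to assume, or extract from the construction in \cite{gavish2012sampling}, such a balance condition $|X^{l+1}_{k,j}|\ge b|X^l_k|$. Under it, $|X^l_k|\asymp 2^{-l}$-type decay holds up to constants depending on $b$, $L$ and $S$. If that is unavailable, I would redo the coefficient estimates of Proposition \ref{prop:5.1} directly in terms of node sizes, bypassing the dyadic exponents. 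The point to watch is the mixed-difference bound on the rectangle $R(\hat l,\hat s)$: one must make sure the product $\rho_R^{\alpha}\rho_R^{\alpha}$ in the $x$- and $y$-directions really dominates $|X^l_k\times Y^s_r|^{2\alpha}$ for the relevant $(l,s)\in B(\hat l,\hat s)$.
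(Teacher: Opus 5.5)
The route you take is the paper's route. The paper gives no separate proof of Proposition \ref{prop:5.2}; it is meant to follow by feeding the coefficient bounds of Proposition \ref{prop:5.1} into Lemma \ref{lem:3.11}, and your Step 1 is a cleaner version of that computation.

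\textbf{The gap.} Step 1 claims that $\beta^{l,s}_{k,j,r,i}$ and $\gamma^{l,s}_{k,j,r,i}$ obey the same rectangle bound $|X^l_k\times Y^s_r|^{\alpha+\frac12}$ as the tensor-wavelet coefficients. Lemma \ref{lem:3.10} says nothing about them, and no one-variable analogue gives that rate. The factor $\phi^s_{r,i}$ has no cancellation in $y$, so H\"older control of $f$ makes $\beta$ small only through $|X^l_k|$. Concretely, take $f(x,y)=g(x)$. Every $\psi^l_{k,j}\times\psi^s_{r,i}$ coefficient vanishes, since the $\psi^s_{r,i}$ have mean zero. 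Yet $\beta^{l,s}_{k,j,r,i}$ is the one-variable coefficient of $g$, up to the normalization of $\phi^s_{r,i}$, and it carries no factor $|Y^s_r|^{\alpha}$.

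This is not a technicality. The product $Q^lP^s(f)\,P^lQ^s(f)$ is of the same order as $Q^lQ^s(f)$, which is exactly why the $A''$ term sits in $\tilde A_{L,S}(f)$. Under your bound it would be residual-sized. The residual also genuinely contains cross terms that gain in only one direction. Take $A(t)=t^2$ and set $F=P^lP^s(f)$, $a=Q^lP^s(f)$, $b=P^lQ^s(f)$, $c=Q^lQ^s(f)$, using $P^{l+1}=P^l+Q^l$ in each variable. The $(l,s)$ term of the double telescoping sum is exactly
\[ (F+a+b+c)^2-(F+a)^2-(F+b)^2+F^2 = 2Fc+2ab+\bigl(2ac+2bc+c^2\bigr). \]
Here $2Fc+2ab=A'(F)c+A''(F)ab$ is what $\tilde A_{L,S}(f)$ keeps. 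The residual piece $2ac$ improves on $c$ only by the $x$-factor carried by $a$. So in general the scale-$(l,s)$ pieces of $\Delta_{L,S}(A,f)$ are not of full-rectangle order, and the hypothesis of Lemma \ref{lem:3.11} is not reached.

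Nor can you rescue the $A''$ terms by extracting a factor from $A''(P^lP^s(f)+h_{\mu,\omega}(f))-A''(P^lP^s(f))$. That needs a Lipschitz bound on $A''$, which $A\in C^2$ does not give. Even then, $|h_{\mu,\omega}(f)|\lesssim|a|+|b|+|c|$ gains in only one direction at a time. The paper's Proposition \ref{prop:5.1} makes the same move, bounding $\beta$ and $\gamma$ by $2^{-(l+s)(\alpha+\frac12)}$. So you have reproduced the intended argument together with its gap.

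\textbf{The obstacle you single out.} The mismatch between $2^{-(l+s)}$ and node sizes is not where the difficulty lies. Step 1 already produces bounds in node sizes directly from Lemma \ref{lem:3.10}. So you never need the dyadic form of Proposition \ref{prop:5.1}, nor a balance hypothesis. That hypothesis is not among the assumptions, and in any case it only gives $|X^l_k|\ge b^l$, which dominates $2^{-l}$ uniformly in $l$ only if $b\ge\frac12$.

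Moreover, once constants may depend on $L$ and $S$, as in your fallback and in the paper's $C_{L,S}$, the proposition is immediate and needs neither lemma. On a finite tree, $\rho_R$ between distinct points is bounded below by some $\rho_{\min}>0$. The mixed difference vanishes when $x_i=x_j$ or $y_i=y_j$. Hence any $g$ on $X\times Y$ satisfies Definition \ref{def:3.2} with exponent $2\alpha$ and a constant depending only on $\lVert g\rVert_{L^\infty}$ and $\rho_{\min}$. The statement worth proving is a bound uniform in the tree, with constant $C_A\lVert f\rVert_{\Lambda_\alpha}$, and that is exactly what Step 1 does not deliver.
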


This leads us to the following corollary:

\begin{corollary}
$ \lVert \Delta_{L,S}(A,f) \rVert_{\Lambda_{2\alpha}(X \times Y)} \leq C_A \lVert f \rVert_{\Lambda_{\alpha}(X \times Y)} $
\label{cor:5.3}
\end{corollary}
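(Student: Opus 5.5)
The corollary should follow by tracking constants through Proposition \ref{prop:5.1} and Lemma \ref{lem:3.11}, rather than through a new argument. I would first make the dependence on $f$ explicit in Proposition \ref{prop:5.1}. Take $C_f := \lVert f \rVert_{\Lambda_{\alpha}(X \times Y)}$ to be the smallest constant $C$ in Definition \ref{def:3.2}. Lemma \ref{lem:3.10} then gives bounds of the form $|\alpha^{l,s}_{k,j,r,i}|, |\beta^{l,s}_{k,j,r,i}|, |\gamma^{l,s}_{k,j,r,i}| \leq c\, C_f\, |X^l_k \times Y^s_r|^{\alpha+\frac12}$, with $c$ independent of $f$. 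I would read the dyadic factor $2^{-(l+s)}$ in Proposition \ref{prop:5.1} as standing in for $|X^l_k \times Y^s_r|$, as the paper does.

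Next I would redo each of the four term estimates of Proposition \ref{prop:5.1}, now keeping the constants. In the first two terms, the factor coming from $A'$ is controlled by $\sup|A''|$ on the compact range of $f$ (which contains the ranges of $P^lP^s(f)$ and the interpolants $h_{\mu,\omega}$). Multiplying this by a difference of scaling/wavelet coefficients, of size $\lesssim C_f |X^l_k\times Y^s_r|^{\alpha+\frac12}$, and by $\alpha^{l,s}_{k,j,r,i}$ gives a coefficient bound $\eta^{l,s}_{k,j,r,i} \lesssim \sup|A''|\, C_f^2\, |X^l_k \times Y^s_r|^{2\alpha+1}$. The two $A''$ terms are handled the same way, using the modulus of continuity of $A''$ on the compact range. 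They carry three coefficient factors, so they are bounded by $C_f^3|\cdot|^{3\alpha+\frac32}$, which is at most $C_f^3|\cdot|^{2\alpha+1}$ because node sizes are normalized so that $|X^l_k\times Y^s_r|\le 1$. The outcome is $|\eta^{l,s}_{k,j,r,i}| \leq K_A(C_f)\, |X^l_k\times Y^s_r|^{2\alpha+1}$.

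I would then run the argument of Lemma \ref{lem:3.11} with the general constant $K_A(C_f)$ in place of $1$. The lemma is linear in the coefficient bound: the split of scales into $U(\hat l,\hat s)$, whose contribution to the mixed difference vanishes, and $B(\hat l,\hat s)$, whose contribution is summed as a geometric series, involves the bound only through a multiplicative factor. It therefore yields $\lVert \Delta_{L,S}(A,f)\rVert_{\Lambda_{2\alpha}(X\times Y)} \leq C_{L,S}\, K_A(C_f)$, together with the corresponding one-variable difference bounds. This also settles Proposition \ref{prop:5.2}.

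The main obstacle is that $K_A(C_f)$ is quadratic (or cubic) in $C_f$ rather than linear. To recover the linear form $C_A \lVert f\rVert_{\Lambda_\alpha}$ stated in the corollary, I would use the standing finiteness and compactness. Since $X\times Y$ is finite, $\lVert f\rVert_{L^\infty}$ is finite, and $C_f \le 2\lVert f\rVert_{L^\infty}/\min\rho_R^{\alpha}$. I would then absorb all powers of $C_f$ beyond the first, together with $\sup|A''|$ and the modulus of continuity of $A''$ on $[-\lVert f\rVert_\infty, \lVert f\rVert_\infty]$, into $C_A$. So $C_A$ is allowed to depend on $A$, on the trees, and on a fixed a priori bound for $f$; I would state that dependence explicitly as the price of the linear form.
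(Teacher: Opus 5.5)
Your argument is essentially sound in the form you actually state it, with $C_A$ depending on $A$, on the trees, and on an a priori bound for $\lVert f\rVert_{L^\infty}$. It follows a different route from the paper's.

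The paper measures both sides in the tensor wavelet-coefficient norms it defines just before its proof. It divides the bound of Proposition \ref{prop:5.1} by the normalization $2^{-(l+s)(2\alpha+\frac12)}$ to get $2^{-(l+s)/2}\le 1$, and it bypasses Lemma \ref{lem:3.11}. Its only use of $f$ is the \emph{upper} bound $\lVert f\rVert_{\Lambda_\alpha}\le C_{L,S}$. The final inequality is then simply asserted, although no upper bound on $\lVert f\rVert_{\Lambda_\alpha}$ can produce a right-hand side $C_A\lVert f\rVert_{\Lambda_\alpha}$.

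Your route improves on this in three ways:
\begin{itemize}
\item Tracking $C_f$ through the term estimates supplies exactly that missing factor.
\item Grouping the two $A'$ terms into $A'(P^lP^s(f)+h_{\mu,\omega}(f))-A'(P^lP^s(f))$ is what genuinely produces two small factors. The paper's term-by-term treatment does not.
\item Your diagnosis of superlinearity is correct, and the weakened $C_A$ is unavoidable. For $A(u)=u^2$ every operator in $\tilde A_{L,S}$ is positively homogeneous, so $\Delta_{L,S}(A,tf)=t^2\Delta_{L,S}(A,f)$ while the right-hand side is linear in $t$. The paper's constants also depend tacitly on the range of $f$ and on $L,S$.
\end{itemize}
Measuring $f$ by the Definition \ref{def:3.2} constant is also not cosmetic. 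The paper's coefficient seminorm vanishes on every $f=g(x)+k(y)$, because the wavelets have mean zero. But for $A(u)=u^3$ the residual of such an $f$ generally has genuinely mixed terms; with standard Haar-like operators one gets, e.g., $3\,k(y)\sum_l\bigl(Q^lg(x)\bigr)^2$. So with the paper's norm on the right, the inequality fails in general for any constant.

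Two steps of yours need repair, though neither breaks the argument given the dependence you allow.
\begin{itemize}
\item Lemma \ref{lem:3.10} only controls the $\psi\times\psi$ coefficients. The coefficients $\beta$, $\gamma$ and the one-directional increments inside $h_{\mu,\omega}$ decay in one variable only. For $f=g(x)$, in the normalization of Lemma \ref{lem:3.10}, $|\beta^{l,s}_{k,j,r,i}|$ can be of order $C_f|X^l_k|^{\alpha+\frac12}|Y^s_r|^{\frac12}$, with no factor $|Y^s_r|^{\alpha}$. Hence $|\beta|,|\gamma|\le c\,C_f|X^l_k\times Y^s_r|^{\alpha+\frac12}$ holds only with $c$ depending on the smallest node size.
\item Three coefficient factors in the $A''$ terms would require $A''$ to be Lipschitz. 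For $A\in C^2$, simply bound $|A''|$ by its supremum: $\mathbf v_2^{l,s}(f)$ and $\tilde{\mathbf v}_2^{l,s}(f)$ are already products of two coefficient-sized factors, which gives the exponent $2\alpha+1$ directly.
\end{itemize}
Be aware of what the allowed dependence costs. Once constants may depend on the trees, the multiscale bookkeeping is not what drives the estimate. On a finite set, the $\Lambda_{2\alpha}$ constant of $\Delta_{L,S}(A,f)$ is at most a tree-dependent multiple of its oscillation. That oscillation is in turn at most a tree-dependent multiple of $(\sup|A'|+\sup|A''|\,C_f)\,C_f$. So the corollary has real content only with constants uniform in the trees, and neither your argument nor the paper's provides that.
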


To prove this, we first define the following H\"older norms analogous to those defined in \cite{leeb2016Holder,ankenman2014geometry}

\begin{definition}
\begin{align}
\lVert f \rVert_{\Lambda_{\alpha}(X)} = \sup_{l,k,j} \frac{|<f(x),\psi^l_{k,j}(x)>|}{2^{-l(\alpha + \frac{1}{2})}} 
\qquad
\lVert f \rVert_{\Lambda_{2 \alpha}(X)} = \sup_{l,k,j}  \frac{|<f(x),\psi^l_{k,j}(x)>|}{2^{-l(2\alpha + \frac{1}{2})}} 
\end{align}
\end{definition}

\begin{definition}
\begin{align}
\lVert f \rVert_{\Lambda_{\alpha}(X \times Y)} = \sup_{l,k,j,s,r,i} \frac{|<f(x,y),\psi^l_{k,j}(x) \times \psi^s_{r,i}(y)>|}{2^{-(l+s)(\alpha + \frac{1}{2})}} 
\end{align}

\begin{align}
\lVert f \rVert_{\Lambda_{2 \alpha}(X \times Y)} =  \sup_{l,k,j,s,r,i} \frac{|<f(x,y),\psi^l_{k,j}(x) \times \psi^s_{r,i}(y) >|}{2^{-(l+s)(2\alpha + \frac{1}{2})}} 
\end{align}
\end{definition}

\begin{proof}
Since $f \in \Lambda_{\alpha}(X)$, appealing to Lemma \ref{lem:3.10} gives $\lVert f \rVert_{\Lambda_{\alpha}} \leq C_{L,S} 1$. However,

\begin{align}
& \lVert \Delta_{L,S}(A,f) \rVert_{\Lambda_{2 \alpha}(X \times Y)} = C_{L,S} \frac{2^{-(l+s)(2\alpha +1)}}{2^{-(l+s)(2 \alpha + \frac{1}{2})}} \nonumber \\
& = C_{L,S} 2^{(-(2l\alpha+2s\alpha+l+s) - (-(2l\alpha+2s\alpha+\frac{l+s}{2}))} \nonumber \\
& = C_{L,S} 2^{-\frac{(l+s)}{2}}
\end{align}

where the firs step holds from Proposition \ref{prop:5.1} therefore $\lVert \Delta_{L,S}(A,f) \rVert_{\Lambda_{2 \alpha}(X \times Y)} \leq 2^{-\frac{(l+s)}{2}}$ and $ \Delta_{L,S}(A,f) \rVert_{\Lambda_{2\alpha}(X \times Y)} \leq C_A \lVert f \rVert_{\Lambda_{\alpha}(X \times Y)}  $

\end{proof}

\bibliographystyle{amsplain}
\bibliography{main}

\end{document}